\newtheorem{lemma}{Lemma}[section]
\newtheorem{remark}{Remark}[section]
\newcommand{\todocolor}{magenta}
\tikzstyle{todoboxstyle} = [draw=\todocolor, fill=\todocolor!10, very thick,
\tikzstyle{fancytitle} =[fill=\todocolor!10, text=\todocolor]
\newif\ifgray
\newcommand{\1}{{\rm 1}\mskip -4,5mu{\rm l} }
\newcommand{\argmin}{\operatornamewithlimits{argmin}}
\newcommand{\normone}[1]{\ensuremath{\!|\!| #1 | \! |_{1}}}
\newcommand{\norm}[1]{\ensuremath{\!|\!| #1 | \! |}}
\newcommand{\inprod}[2]{\ensuremath{\langle #1 , \, #2 \rangle}}
\newcommand{\suchthat}{\;\ifnum\currentgrouptype=16 \middle\fi|\;}
\newcommand{\tp}{\ensuremath{^\top}}
\def\E{\mathbb{E}}
\def\R{\mathbb{R}}
\newcommand{\Rp}{\ensuremath{\R^p}}
\newcommand{\Rpp}{\ensuremath{\R^{p\times p}}}
\newcommand{\otp}{\ensuremath{\{1,\dots,p\}}}
\newcommand{\si}{\sum_{i=1}^n} 
\def\sino{\frac{1}{n}\sum_{i=1}^n} 
\newcommand{\mcset}{\ensuremath{Y}}
\newcommand{\er}{Erd\H os-R\'enyi}
\newcommand{\blind}{0}
\begin{document}
	
\def\spacingset#1{\renewcommand{\baselinestretch}%
	{#1}\small\normalsize} \spacingset{1}

\if0\blind
{
	\title{\bf Graphical Models for Discrete~and~Continuous~Data}
	\author{Rui Zhuang 
		\hspace{.2cm}\\
		Department of Biostatistics, University of Washington\\
		and \\
		Noah Simon\\
		Department of Biostatistics, University of Washington\\
		and \\
		Johannes Lederer\\
		Department of Mathematics, Ruhr-University Bochum
	}
	\maketitle
} \fi

\if1\blind
{
	\bigskip
	\bigskip
	\bigskip
	\begin{center}
		{\LARGE\bf Title}
	\end{center}
	\medskip
} \fi

\bigskip
\begin{abstract}
	We introduce a general framework for undirected graphical models. It generalizes Gaussian graphical models to a wide range of continuous, discrete, and combinations of different types of data. The models in the framework, called exponential trace models, are amenable to estimation based on maximum likelihood. We introduce a sampling-based approximation algorithm for computing the maximum likelihood estimator, and we apply this pipeline to learn simultaneous neural activities from spike data. 
\end{abstract}

\noindent%
{\it Keywords:}  Non-Gaussian Data, Graphical Models, Maximum Likelihood Estimation 

\spacingset{1.45}


\newcommand{\originaldim}{\ensuremath{p}}
\newcommand{\newdim}{\ensuremath{q}}
\newcommand{\Rnd}{\ensuremath{\R^\newdim}}
\newcommand{\dimt}{\ensuremath{2}}
\newcommand{\Rdimt}{\ensuremath{\R^\dimt}}
\newcommand{\Rqq}{\ensuremath{\R^{\newdim\times\newdim}}}

\newcommand{\data}{\ensuremath{X}}
\newcommand{\dataa}{\ensuremath{x}}
\newcommand{\datav}{\ensuremath{{\bf x}}}
\newcommand{\datamca}{\ensuremath{{\bf z}}}
\newcommand{\datamc}{\ensuremath{{Z}}}
\newcommand{\datas}{\ensuremath{X}}
\newcommand{\datasi}{\ensuremath{{X^i}}}
\newcommand{\datasseq}{\ensuremath{{\datas^1,\dots,\datas^n}}}
\newcommand{\datasseqa}{\ensuremath{{\datav^1,\dots,\datav^n}}}

\newcommand{\otq}{\ensuremath{\{1,\dots\newdim\}}}

\newcommand{\Matrix}{\ensuremath{\mathfrak{M}}}
\newcommand{\Matrixint}{\ensuremath{\mathfrak{M^*}}}

\renewcommand{\matrix}{\ensuremath{{\mathrm M}}}
\newcommand{\matrixtrue}{\ensuremath{{\matrix}}}
\newcommand{\matrixtoy}{\ensuremath{{\widetilde\matrix}}}

\newcommand{\normtotal}{\ensuremath{\gamma}}

\newcommand{\tr}{\ensuremath{\operatorname{tr}}}
\newcommand{\fisher}{\ensuremath{\operatorname{I}_\tensortrue}}
\newcommand{\estfisher}{\ensuremath{\widehat{\operatorname{I}}_\tensortrue}}
\newcommand{\estfishermatrix}{\ensuremath{\widehat{\operatorname{I}}_\matrixtrue}}

\newcommand{\matrixest}{\ensuremath{{\widehat{\matrix}}}}

\newcommand{\normtr}[1]{\norm{#1}_{\operatorname{tr}}}
\newcommand{\inprodtr}[2]{\inprod{#1}{#2}_{\operatorname{tr}}}

\newcommand{\hterm}{\ensuremath{\xi}}
\newcommand{\htermdatav}{\ensuremath{\hterm(\datav)}}

\newcommand{\Tdatap}{\ensuremath{T}}
\newcommand{\Tdatapv}{\Tdatap(\datav)}
\newcommand{\Tdatae}{\ensuremath{\overline{T}}}

\newcommand{\Domain}{\ensuremath{\mathcal D}}

\newcommand{\measure}{\ensuremath{\nu}}

\newcommand{\nb}{\ensuremath{\mathcal N}}
\newcommand{\nbj}{\ensuremath{\mathcal \nb(j)}}
\newcommand{\bigCI}{\mathrel{\text{\scalebox{1.07}{$\perp\mkern-10mu\perp$}}}}

\newcommand{\tensor}{\matrix}

\newcommand{\tensortrue}{\matrixtrue}

\newcommand{\Tensor}{\Matrix}

 \newcommand{\tensortrans}{{\textcolor{red}{\ensuremath{\matrix}}}}

 \newcommand{\tensorest}{\matrixest}

\newcommand{\datatotala}{\underline{\datav}}
\newcommand{\datatotal}{\underline{\data}}

\newcommand{\tensorspace}{\Rqq}

\newcommand{\tensorspaceinit}{\Domain}

\newcommand{\link}{\ensuremath{e}}
\newcommand{\linkd}{\ensuremath{e}}
\newcommand{\linkinv}{\ensuremath{\log}}

\newcommand{\tensorindex}{_{ij}}
\newcommand{\tensorindexsum}{_{k,l=1}}
\newcommand{\tensorindexs}{{i,j}}
\newcommand{\tensorindexp}{_{kl}}
\newcommand{\tensorindexpsum}{_{k',l'=1}}
\newcommand{\tensorindexps}{{i'j'}}

\newcommand{\normaltensor}{\ensuremath{A}}

\newcommand{\hypn}{\ensuremath{\mathrm{H_N}}}
\newcommand{\hypa}{\ensuremath{\mathrm{H_A}}}

\newcommand{\corvec}{\ensuremath{\mathbf{m}}}

\newcommand{\corfun}{\ensuremath{\mathbf{t}}}
\newcommand{\corfunv}{\ensuremath{\corfun(\datav)}}

\newcommand{\dataona}{\ensuremath{{\bf y}}}
\newcommand{\dataonaa}{\ensuremath{{y}}}
\newcommand{\dataoffa}{\ensuremath{{z}}}
\newcommand{\datauna}{\ensuremath{{\bf w}}}
\newcommand{\dataon}{\ensuremath{{Y}}}
\newcommand{\dataoff}{\ensuremath{{Z}}}
\newcommand{\dataun}{\ensuremath{{W}}}
\newcommand{\potentialset}{\ensuremath{A}}
\newcommand{\linkfunction}{\ensuremath{\phi}}
\newcommand{\matrixo}{\ensuremath{\matrix^1}}
\newcommand{\matrixoo}{\ensuremath{\matrix^2}}
\newcommand{\matrixooo}{\ensuremath{\matrix^3}}
\newcommand{\matrixot}{\ensuremath{\matrix^{12}}}
\newcommand{\setone}{\ensuremath{\1\{\dataoffa=0\}}}
\newcommand{\setonec}{\ensuremath{\1\{\dataoffa=1\}}}

\section{Introduction}
Gaussian graphical models~\citep{Drton16,Lauritzen96,Wainwright08} describe the dependence structures in normally distributed random vectors. These models have become increasingly popular in the sciences, because their  representation of the dependencies is lucid and can be readily estimated. For a brief overview, consider a random vector~$\data\in\Rp$ that follows a centered normal distribution with density
\begin{equation}\label{gaussian}
  f_{\Sigma}(\datav)=\frac{1}{(2\pi)^{p/2}\sqrt{ |\Sigma|}}\,e^{-\datav\tp\Sigma^{-1}\datav/2}
\end{equation}
with respect to Lebesgue measure, where the population covariance $\Sigma\in\Rpp$ is a symmetric and positive definite  matrix. Gaussian graphical models associate these densities with a graph $(V,E)$ that has vertex set $V:=\{1,\dots,p\}$ and edge set $E:=\{(i,j):i,j\in\otp,i\neq j,\Sigma^{-1}_{ij}\neq 0\}.$ The graph encodes the dependence structure of~\data\ in the sense that any two entries $\data_i,\data_j,$ $i\neq j,$ are conditionally independent given all other entries if and only if $(i,j)\notin E$. A natural and straightforward estimator of $\Sigma^{-1},$ and thus of $E,$ is the inverse of the empirical covariance, which is also the maximum likelihood estimator. Inference can then be approached via the quantiles of the normal distribution.

The problem with Gaussian graphical models is that in practice, data often violate the normality assumption. Data can be discrete, heavy-tailed, restricted to positive values, or deviate from normality in other ways. Graphical models for some types of non-Gaussian data have been developed, including copula-based models~\citep{Liu15,MR3059084,Liu09,Xue12}, score matching approach~\citep{lin2016estimation,yu2019generalized}, Ising models~\citep{Brush67,Lenz20}, and multinomial extensions of the Ising models~\citep{Loh13}. However, there is no general framework that comprises different data types including finite and infinite count data, potentially heavy-tailed continuous data, and combinations of discrete and continuous data and at the same time, ensures a rigid theoretical structure.

We make three main contributions in this paper: 
\begin{itemize}
\item We formulate a general framework for undirected graphical models that both encompasses previously studied and new models for continuous, discrete, and combined data types.
\item We show that maximum likelihood is based on a convex and smooth optimization function and provides consistent estimation and inference for the model parameters in the framework.
\item We establish a sampling-based approximation algorithm for computing the maximum likelihood estimator.
\end{itemize}

Let us have a glance at the framework. For this, we start with the  Gaussian densities~\eqref{gaussian}. Our first observation is that  $-\datav\tp\Sigma^{-1}\datav/2=-\inprodtr{\Sigma^{-1}}{\datav\datav\tp/2},$ where $\inprodtr{\cdot}{\cdot}$ is the trace inner product. This formulation looks ``somewhat less revealing''~\cite[Page~125]{Eaton07} on first sight, but  it has two conceptual advantages. First, we argue that writing the parameters and the data as algebraic duals of each other makes their relationship more symmetric. Second, we argue that it is a good starting point for generalizations. For this, we take the viewpoint that the exponents in the densities are linear functions of the matrix $\datav\datav\tp/2,$ and then replace this matrix by a general matrix-valued function~\Tdatap\ of \datav. Our second observation is that the fundamental quantity in the family of  Gaussian graphical models is the inverse covariance matrix~$\Sigma^{-1}$ rather than the covariance matrix~$\Sigma$ itself. This suggests a reparametrization of the model using the matrix~$\Sigma^{-1},$ which is then replaced by a general matrix~\matrix. This subtlety is important: as we will see later, the matrix~\matrix\ contains all information about the dependence structure of~\data, while the equality of $\matrix^{-1}$ and the covariance matrix is a mere coincidence in the Gaussian case. With these two observations in mind, and denoting  the log-normalization by $\normtotal(\matrix)$,  with $\normtotal(\matrix)=\log((2\pi)^{p}|\matrix^{-1}|)/2$ in the Gaussian case, we can then generalize the densities~\eqref{gaussian} to
\begin{equation*}
  f_{\matrix}(\datav)=e^{-\inprodtr{\matrix}{\Tdatap(\datav)}-\normtotal{(\matrix)}}
\end{equation*}
with respect to an arbitrary $\sigma$-finite measure $\measure$, and with $\matrix,\Tdatap\in\Rqq$ a matrix-valued parameter and  data function, respectively. While additional data terms can be absorbed in the measure~\measure, it is sometimes illustrative to write them  explicitly. We thus consider distributions with densities of the form
\begin{equation*}
  f_\tensor(\datav)=e^{-\inprodtr{\tensor}{\Tdatap(\datav)}+\hterm\left(\datav\right)-\normtotal(\matrix)}\,,
\end{equation*}
where $\hterm(\datav)$ depends on~$\datav$ only. These densities form an exponential family indexed by~\matrix\ and are called exponential trace models in the following for convenience.

We recall that the well-known pairwise interaction models can also be written in exponential form, but there are important differences to the above formulation. First,  pairwise interaction models and exponential trace models are not sub-classes of one another: exponential trace models are not limited to pairwise interactions, while pairwise interaction models are not limited to canonical parameterizations. Yet, importantly, exponential trace models generalize pairwise interaction models in the sense that all generic examples of pairwise interaction models are encompassed. We also highlight that in contrast to pairwise interaction models, we allow for $\newdim\neq \originaldim,$ which  helps for concise formulations of mixed graphical models, for example. In general, we argue that the exponential trace framework is a practical starting point for general studies of graphical models, because it comprises a very large variety of examples and still ensures a firm theoretical structure.

We carefully specify and study the  described distributions in the following sections. 
Section~\ref{sec:framework} contains the proposed framework: we define the densities in Section~\ref{sec:model}, and we discuss a variety of examples in Sections~\ref{sec:examplesstandard} and~\ref{sec:examplesnew}. 
Section~\ref{sec:estimation} is focused on estimation: we discuss the maximum likelihood estimator for the model parameters~in Section~\ref{sec:estimator}, and we introduce a numerical algorithm in Sections~\ref{sec:algorithm}. 
Section~\ref{sec:simulation} shows simulation results for different types of data. Section~\ref{sec:application} applies the method to neural spike data. 
We conclude the paper with a discussion in Section~\ref{sec:discussion}. 
The proofs are deferred to the Appendix.


\subsection*{Notation}  For matrices $A,B\in\R^{s\times t},$ $s,t\in\{1,2,\dots\}$, we denote the trace inner product (or Frobenius inner product) by
\begin{align*}
  \inprodtr{ A}{ B}:=\tr\big(A\tp B\big)=\sum_{i=1}^s\sum_{j=1}^t A_{ij} B_{ij}
\end{align*}
and the corresponding norm by
\begin{align*}
  \normtr{A}:=\sqrt{\inprodtr{A}{ A}}=\sqrt{\sum_{i=1}^s\sum_{j=1}^tA_{ij}^2}\ .
\end{align*}

We consider random vectors $\data=(\data_1,\dots,\data_p)\tp\in\R^p.$ We denote random vectors and their realizations by upper case letters such as~$\data$ and arguments of functions by lower case, boldface letters such as~\datav.   Given a set $S\subset\otp,$ we denote by $\data_{S}\in\R^{|S|}$ the vector that consists of the coordinates of~$\data$ with indices in~$S,$ and we set $\data_{-S}:=\data_{S^c}\in\R^{p-|S|}$. Independence of two elements $\data_i$ and $\data_j$, $i\neq j,$ is denoted by $\data_i\perp \data_j$; conditional independence of $\data_i$ and $\data_j$ given all other elements is denoted by $\data_i\bigCI \data_j|\data_{-\{i,j\}}.$


\section{Framework}\label{sec:framework}
We first discuss our framework. In Section~\ref{sec:model}, we formulate the densities. In Section~\ref{sec:examplesstandard}, we show that these densities apply to standard examples of graphical models. In Section~\ref{sec:examplesnew}, we  study additional examples.

\subsection{Exponential Trace Models}\label{sec:model} In this section, we formulate  probabilistic models for vector-valued observations that have dependent coordinates. Specifically, we consider   arbitrary (non-empty) finite or continuous domains $\Domain\subset\R^p$ and random vectors~$\data\in\Domain$ that have densities of the form
\begin{equation}\label{model}
f_\tensor(\datav):=e^{-\inprodtr{\tensor}{\Tdatap(\datav)}+\hterm\left(\datav\right)-\normtotal(\matrix)}
\end{equation}
with respect to some $\sigma$-finite measure \measure\ on \Domain. For reference, we call these models exponential trace models.

We begin by  specifying the different components of our model. The densities are indexed by $\tensor\in\Tensor$, where $\Tensor$ is a subset of \begin{equation*}
  \Matrixint:=\operatorname{interior}\{\matrix\in\Rqq:\normtotal(\matrix)<\infty\}\,.
\end{equation*}
Unlike conventional frameworks, we do not require $q=p,$ with the advantage of concise formulations of mixture models, for example.  In generic applications, \matrix\   comprises the dependence structure of~\data\ and determines if two coordinates $\data_i$ and $\data_j$ are positively or negatively correlated. We will discuss these aspects in the next sections. The arguably most important note here is that the integrability condition $\normtotal(\matrix)<\infty$  is feasible. In particular, our framework provides natural formulations of models that avoid unreasonable restrictions on the parameter space. It is best to see this in specific examples, so that we defer to later.

Next, the data enters the model via a matrix-valued function
\begin{align*}
  \Tdatap:\Domain\to&\ \tensorspace\\
  \datav\mapsto&\ \Tdatap(\datav)
\end{align*}
and a  real-valued function
\begin{align*}
  \hterm:\Domain\to&\ \R\\
  \datav\mapsto&\ \hterm(\mathbf{\datav})\,, 
\end{align*}
and $\normtotal(\tensor)$ is the normalization defined as
\begin{equation*}
\normtotal(\tensor):=\log\int_\Domain e^{-\inprodtr{\tensor}{\Tdatap(\datav)}+\hterm(\datav)}\,d\measure\,.
\end{equation*}

We finally have to impose two technical assumptions on the parameter space. Our first assumption is that the function $\matrix\mapsto f_\matrix$ of \Matrix\ to the densities with respect to the measure~\measure\ is bijective. Sufficient conditions for this are provided in \citep[Page~199]{Berk72} and \citep[Definition~1.3]{MR558392}; we stress, however, that the bijection is required here only on~\Matrix\ rather than on the full set~\Matrixint.   Our second assumption is that  $\Matrix$ is convex and that $\Matrix$ is open with respect to an affine subspace of~$\Rqq.$ The two assumptions ensure, in particular, that the parameter~$\matrix$ is identifiable and  has a compact and ``full-dimensional'' neighborhood in an affine subspace of \Rqq. Importantly, however, the assumptions are mild enough to allow for overparametrizations in the sense that $\Matrix$ does not have to be open in $\Rqq;$ a typical example is $\Matrix$ equal to the set of symmetric matrices, seen in Sections~\ref{sec:examplesstandard}~and~\ref{sec:examplesnew}.

The exponential family formulation equips the framework with desirable structure. In particular, we can derive the following.
\begin{lemma}\label{propertiesfamily}
  The following two properties are satisfied. 
  \begin{enumerate}
  \item The set \Matrixint\ is convex;
  \item for any $\matrix\in\Matrixint$, the coordinates of~$\Tdatap(\data)$ have moments of all orders with respect to $f_\matrix$.
  \end{enumerate}
\end{lemma}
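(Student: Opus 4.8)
The plan is to handle the two parts separately, exploiting in both cases that the map $\matrix\mapsto-\inprodtr{\matrix}{\Tdatapv}$ is \emph{linear} in $\matrix$. For the first part, I would show that the larger set $\{\matrix\in\Rqq:\normtotal(\matrix)<\infty\}$ is convex; the set $\Matrixint$ is then convex as the interior of a convex set. Fix $\matrix_0,\matrix_1$ in this set and $t\in(0,1)$ (the endpoints $t\in\{0,1\}$ being trivial), and set $\matrix_t:=(1-t)\matrix_0+t\matrix_1$. By linearity of the trace inner product in its first argument, the integrand defining $\normtotal(\matrix_t)$ factors as
\begin{equation*}
  e^{-\inprodtr{\matrix_t}{\Tdatapv}+\hterm(\datav)}=\bigl(e^{-\inprodtr{\matrix_0}{\Tdatapv}+\hterm(\datav)}\bigr)^{1-t}\bigl(e^{-\inprodtr{\matrix_1}{\Tdatapv}+\hterm(\datav)}\bigr)^{t}.
\end{equation*}
Applying Hölder's inequality with conjugate exponents $1/(1-t)$ and $1/t$ then yields $\normtotal(\matrix_t)\le(1-t)\normtotal(\matrix_0)+t\normtotal(\matrix_1)<\infty$, which is exactly the desired convexity (and, as a byproduct, the convexity of $\normtotal$ itself).

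For the second part, I would express the moment generating function of a coordinate of $\Tdatap(\data)$ in terms of $\normtotal$. Let $H\in\Rqq$ be the matrix with a single nonzero entry equal to one in position $(i,j)$, so that $\inprodtr{H}{\Tdatap(\data)}=\Tdatap(\data)_{ij}$. Absorbing the shift $sH$ into the natural parameter gives, for every $s\in\R$,
\begin{equation*}
  \E_{f_\matrix}\bigl[e^{s\,\Tdatap(\data)_{ij}}\bigr]=\int_\Domain e^{-\inprodtr{\matrix-sH}{\Tdatapv}+\hterm(\datav)-\normtotal(\matrix)}\,d\measure=e^{\normtotal(\matrix-sH)-\normtotal(\matrix)}.
\end{equation*}
Because $\matrix$ lies in the interior $\Matrixint$, there is a radius $\delta>0$ with $\normtotal(\matrix-sH)<\infty$ for all $\lvert s\rvert<\delta$, so the right-hand side is finite on the two-sided interval $(-\delta,\delta)$. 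A random variable whose moment generating function is finite on an open interval around the origin has finite absolute moments of every order, since one dominates $\lvert y\rvert^k$ by a constant multiple of $e^{\epsilon\lvert y\rvert}\le e^{\epsilon y}+e^{-\epsilon y}$ for small $\epsilon>0$; hence each coordinate $\Tdatap(\data)_{ij}$ has moments of all orders.

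The main point requiring care is the \emph{two-sidedness} in the second part: finiteness of $\normtotal$ on only one side of $\matrix$ would control a single exponential tail and would not suffice for all polynomial moments. This is precisely where the hypothesis $\matrix\in\Matrixint$ enters—interiority in $\Rqq$ guarantees that the perturbations $\matrix-sH$ remain admissible for small $\lvert s\rvert$ of either sign and for the arbitrary coordinate direction $H$. The remaining ingredients (the factorization, Hölder's inequality, and the domination argument) are routine, so I expect no further obstacle.
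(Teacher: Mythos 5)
Your proof is correct, and while it follows the paper's two-part structure, each part uses a genuinely different device. For Part~1, the paper fixes $\matrix,\matrix'\in\Matrixint$ and bounds $e^{\normtotal(\alpha\matrix+(1-\alpha)\matrix')}$ by the pointwise convexity of the exponential, $e^{\alpha a+(1-\alpha)b}\le\alpha e^{a}+(1-\alpha)e^{b}$, which gives finiteness (equivalently, convexity of $e^{\normtotal}$); your H\"older argument yields the stronger conclusion that $\normtotal$ itself is convex, the classical log-convexity of the partition function. Your route is also slightly more careful on a fine point: the paper's last step passes from $\normtotal(\alpha\matrix+(1-\alpha)\matrix')<\infty$ directly to membership in the \emph{interior} $\Matrixint$, which tacitly uses the fact that a convex combination of interior points of a convex set is interior; by proving convexity of the full sublevel set $\{\matrix\in\Rqq:\normtotal(\matrix)<\infty\}$ and then invoking that the interior of a convex set is convex, you close that small gap explicitly. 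For Part~2, both arguments are the same at heart: shift the natural parameter so that the moment generating function of $\Tdatap(\data)$ equals $e^{\normtotal(\matrix-A)-\normtotal(\matrix)}$, and use openness of $\Matrixint$ to get finiteness near the origin. The paper does this for matrix perturbations $A$ ranging over a full neighborhood of $0_{\newdim\times\newdim}$ and then cites \cite[Page~33]{Shao03} for the implication from a finite moment generating function to moments of all orders, whereas you specialize to coordinate directions $sH$ (which suffices, since the claim concerns individual coordinates of $\Tdatap(\data)$) and prove that implication elementarily via the domination $|y|^k\le C\,(e^{\epsilon y}+e^{-\epsilon y})$. Your stress on the \emph{two-sidedness} of the interval of finiteness is exactly the right point of care, and it is precisely what the paper's neighborhood of $0_{\newdim\times\newdim}$ delivers; both write-ups are complete, with yours trading a citation for a self-contained argument.
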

\noindent Property 1 ensures that $\Matrix=\Matrixint$ satisfies the above assumption about $\Matrix$ and Property~2 ensures concentration of the maximum likelihood estimator discussed later.

\subsection{Standard Graphical Models}\label{sec:examplesstandard} The goal of this section is to demonstrate that standard graphical models, such as  Ising models with binary and $m$-ary responses and Gaussian and non-paranormal graphical models, fit the exponential trace framework. In combination with the results in Section~\ref{sec:estimation}, this shows in particular that standard graphical models are automatically equipped with more structure than suggested by common pairwise interaction formulations.

For the standard models, it is sufficient to consider $\newdim=\originaldim,$  $\Tdatap\tensorindex(\datav)  \equiv \Tdatap\tensorindex(\dataa_{i},\dataa_{j}),$ and $ \hterm(\datav) = \sum_{j=1}^p\hterm_{j}(\dataa_j).$  Given~\matrix, we then define a  graph  $G:=(V,E),$ where  $V:=\otp$ is the vertex (or node) set and $E:=\{(i,j):i,j\in V,i\neq j,\tensor_{ij}\neq 0\}$  the edge set. Two matrices $\tensor,\tensor'\in\Tensor\subset\Rpp$ correspond to the same graph if and only if their non-zero patterns are the same. In view of the examples below, we are particularly interested in symmetric dependence structures, that is, we consider symmetric matrices~$\matrix$ in what follows. Then, also the edge set $E$ is symmetric, that is, $(i,j)\in E$ if and only if $(j,i)\in E,$ and the graph~$G$ is called~{undirected.} 

The corresponding models~$f_\matrix$ are a special case of pairwise interaction models  (pairwise Markov networks). Assuming that $\measure$ is a product measure, a density $h$ with respect to \measure\ is a pairwise interaction model if it can be written in the form
\begin{equation*}
  h(\datav)=\prod_{i,j=1}^\originaldim h_{ij}(\dataa_i,\dataa_j)
\end{equation*}
with positive functions $h_{ij}.$ This means that the densities of pairwise interaction models can be written as products of terms that depend on at most two coordinates.

The graph~$G$ now encodes  the conditional dependence structure, as one can show by applying the Hammersley-Clifford theorem~\citep{Besag74,Grimmett73}. The theorem implies that for a strictly positive density~$h(\datav)$ with respect to a product measure, two elements $\data_i,\data_j$ are conditionally independent given all other coordinates if and only if we can write~$h(\datav)=h^1(\datav_{-i})h^2(\datav_{-j}),$ where $h^1,h^2$ are positive functions.  For the described densities in our framework, we can write $f_\tensor(\datav)=f_\tensor^1(\datav_{-i})f_\tensor^2(\datav_{-j})$ with positive functions~$f_\tensor^1,f_\tensor^2$ if and only if $\tensor_{ij}=0.$ By the above definition of the graph associated with~\tensor, the latter is equivalent to $(i,j)\notin E.$ We thus find
\begin{equation*}
  \data_i\bigCI \data_j|\data_{-\{i,j\}}\text{~~~~~if and only if~~~~~}(i,j)\notin E\,,
\end{equation*}
meaning that the conditional dependence structure of \data\ is represented by  the edge set~$E.$

The graph~$G$ also determines the unconditional dependence structure. To illustrate this, we define~$\overline E$ as the set of all connected components in $\tensor,$ that is, $(i,j)\in \overline E$ if and only if there is a path $(i,i_1),(i_1,i_2)\dots,(i_q,j)\in E$ that connects $i$ and $j$ (in particular, $E\subset \overline E$). For pairwise densities in our framework, it  is easy to check  that
\begin{equation*}
  \data_i\perp \data_j\text{~~~~~if and only if~~~~~}(i,j)\notin \overline E\,.
\end{equation*}
Hence, the dependence structure of \data\ is captured by $\overline E.$ We give an illustration of these properties in Figure~\ref{fig:graphpairwise}.


\begin{figure}[t]
  \centering
  \begin{minipage}[h]{0.48\linewidth}
  \begin{align*}
M=    \begin{bmatrix}
    1.2 & - 0.2 & 0 & 0  \\
    - 0.2 & 1.5 & 0 &  0.1  \\
     0& 0 & 1 &  0 \\
    0 & 0.1 & 0 & 0.5
\end{bmatrix}
  \end{align*}   
  \end{minipage}
  \begin{minipage}[t]{0.48\linewidth}
\begin{tikzcd}[%
    ,cells={nodes={circle,draw,font=\sffamily\footnotesize\bfseries}}
    ,every arrow/.append style={-,thick}
    ]
& 
1\arrow{dl}{}
& \\
2
& & 
4\arrow{ll}{} \\
& 
3
& 
\end{tikzcd}
  \end{minipage}
  \caption{Example of a matrix $\matrix$ (left) and the corresponding graph $G$ (right). The vertex set of the graph is $V=\{1,2,3,4\},$ the edge set of the graph is $E=\{(1,2),(2,1),(2,4),(4,2)\}$. The enlarged edge set is $\overline E=E \cup\{(1,4),(4,1)\}.$ For example, $\data_1$ and $\data_4$ are dependent (indicated by $(1,4)\in\overline E$), but they are conditionally independent given the other elements of~\data\ (indicated by $(1,4)\notin E$). }
  \label{fig:graphpairwise}
\end{figure}

We can now turn to the examples.

\subsubsection{Counting Measure}
We first describe two cases where the base measure~\measure\ is the counting measure on~$\{0,1,\dots\}^{p}.$ Specifically, we show that the well-known Ising and multinomial Ising models are encompassed by our framework.

Beyond the measure, a unifying property of the two examples is that  the integrability condition is satisfied for all matrices. In a formula, this reads
\begin{equation}\label{matricescount}
\normtotal(\matrix)<\infty\text{~~for all }\matrix\in\R^{p\times p}\,.
\end{equation}
Since the set of all matrices in $\Rpp$ is open, the integrability property~\eqref{matricescount} means that
\begin{equation*}
\Matrixint= \{\matrix\in\R^{p\times p}\}\,.
\end{equation*}
This ensures that any convex and (relatively) open set~$\Matrix\subset\Rpp$ meets our technical assumptions. Importantly, we show later that the same properties are shared by exponential trace models for Poisson data. 

\paragraph{Ising}  The Ising model has a variety of applications, for example, in Statistical Mechanics and Quantum Field Theory~\citep{Gallavotti13,Zuber77}. Its densities are proportional to
\begin{equation*}
   e^{\sum_{j=1}^p a_{jj}\dataa_{j}+\sum_{j,k}a_{jk}\dataa_{j}\dataa_{k} }
\end{equation*}
with $a_{jk}=a_{kj},$ and the domain is~$\dataa_{1},\dots,\dataa_{p}\in\{0,1\}.$ As an illustration, consider a material that consists of~$p$ molecules with one ``magnetic'' electron each. The binary variable~$\dataa_j$ then corresponds to the electron's spin (up or down in a given direction) in the $j$th molecule, and the factor $a_{jk}$ determines whether the spins of the electrons in the $j$th and  $k$th molecule  tend to align ($a_{jk}>0$ as in  ferromagnetic materials) or tend to take opposite directions ($a_{jk}<0$ as in anti-ferromagnetic materials). The Ising model is a special case of our framework~\eqref{model}. Indeed, since $1^2=1$ and $0^2=0,$ we can obtain the desired densities by setting $\Domain=\{0,1\}^p$, $\newdim=\originaldim,$ $\matrix_{ij}=-a_{ij},$ $\Tdatap_{ij}(\datav)=\dataa_i\dataa_j,$ and $\hterm\equiv0.$ Since the domain is finite, the integrability condition~$\normtotal(\matrix)<\infty$ is naturally satisfied for all matrices~\matrix.

\paragraph{Multinomial Ising} The spin of spin~$1/2$ particles (such as the electron) can take two values. Thus, the corresponding measurements can be represented by the domain~$\{0,1\}$ as in the Ising model discussed above. In contrast, the spin of spin~$s$ particles with $s\in\{1,3/2,\dots\}$ (such as $W$ and $Z$ vector bosons and composite particles) can take $2s+1$ values, which can not be represented by binaries directly. Therefore, the multinomial Ising model, which extends the Ising model to multinomial domains, is of considerable interest in quantum physics. For similar reasons, multinomial Ising models are of interest in other fields, see~\citep{Diesner05} for an example in sociology.  

We now show that also the multinomial Ising model is a special case of our framework. For this, we encode the data in an enlarged binary vector. We first denote the original multinomial data by~$Y\in\{0,\dots,m-1\}^l.$ This could correspond to $l$ spin~$(m-1)/2$ particles. We then represent the data with an enlarged binary vector $\data\in\{0,1\}^p$, $p=l\cdot (m-1),$ by setting 
\begin{equation*}
\data_i=\1\Bigl\{Y_j=i-(m-1)\lfloor\frac{i-1}{m-1}\rfloor,j=\lceil\frac{i}{m-1}\rceil\Bigr\}\in\{0,1\}\,.  
\end{equation*}
Each coordinate of the original data (for example, each spin) is now represented by $m-1$ binary variables. We can use the same model as above, except for imposing the additional requirement $\matrix_{ij}=0\text{~if~}\lceil\frac{i}{m-1}\rceil=\lceil\frac{j}{m-1}\rceil$ to avoid self-interactions. These settings yield the standard extension of the Ising model to multinomial data, cf.~\citep{Loh13}. In particular, for $m=2,$  we recover the Ising model above. Again, since the domain is finite, the integrability condition~$\normtotal(\matrix)<\infty$ is naturally satisfied for all matrices~\matrix.

\subsubsection{Continuous Measure}
We now consider two examples in which the base measure~\measure\ is the Lebesgue measure on~$\R^p.$ Specifically, we show that  Gaussian and non-paranormal  graphical models are encompassed by our framework.

We show that the integrability condition is satisfied for all matrices that are positive definite.  In a formula, this reads
\begin{equation}\label{matricescont}
\normtotal(\matrix)<\infty\text{~~for all positive definite }\matrix\in\R^{p\times p}\,.
\end{equation}
Since the set of all positive definite matrices in $\Rpp$ is open, this means that
\begin{equation*}
\Matrixint\supset \{\matrix\in\R^{p\times p}:\matrix\text{ positive definite}\}\,.
\end{equation*}
One can thus take any convex and (relatively) open set of positive definite  matrices as parameter space~\Matrix. Later, we will show that very similar models can also be formulated for exponential data, for example.

\paragraph{Gaussian} The most popular examples are Gaussian graphical models~\citep{Lauritzen96}. For centered data (which can be assumed without loss of generality), these models correspond to random vectors $\data\sim\mathcal N_p(0,\Sigma),$ where $\Sigma$ (and thus also $\Sigma^{-1}$) is a symmetric, positive definite matrix. We can generate these models in our framework~\eqref{model} by setting $\Domain=\Rp$, $\newdim=\originaldim,$ $\matrix=\Sigma^{-1},$ $\Tdatap_{ij}(\datav)=\dataa_i\dataa_j/{2}$, and $\hterm\equiv0.$ 

Gaussian graphical models are  well-studied and are also the starting point for our contribution. It is thus especially important to disentangle general properties of graphical models in our framework from  peculiarities  of the Gaussian case. First, the correspondence of the conditional independence structure and the non-zero pattern of the inverse covariance matrix is specific to the Gaussian case. This has already been pointed out in~\citep{Loh13}, where relationships between the dependence structure and the inverse covariance matrix in Ising models and other exponential families with additional interaction terms are studied. However, instead of concentrating on possible connections, we argue that it is important to distinguish clearly between the two concepts. In our framework, the (conditional) dependencies are completely captured by the matrix~\matrix. It is thus reasonable to consider~\matrix\ the fundamental quantity. Instead, the equality of $\matrix^{-1}$ and $\E_\tensor[\data\data\tp]$ is specific to the Gaussian case, and the (generalized) covariances $\E_\tensor[\data\data\tp]$ ($2\E_\tensor \Tdatap(\data)$) and their inverse should be viewed only as  a characteristic of the model. Second, the type of the node conditionals can change once dependencies are introduced. In the Gaussian case, the node conditionals are normal irrespective of~\matrix. In most other cases, however, the types of node conditionals {cannot} be the same in the dependent and independent case - unless additional assumptions are introduced. We will discuss this in the later examples below.

\paragraph{Non-paranormal} Well-known generalizations of Gaussian graphical models are non-paranormal graphical models~\citep{Liu09}. These models correspond to vectors~\data\ such that \\
$(g_1(\data_1),\dots,g_p(\data_p))\tp\sim\mathcal N_p(0,\Sigma)$ for real-valued, monotone, and differentiable functions $g_1,\dots,g_p$ and symmetric, positive definite matrix~$\Sigma$. These models can be generated in our framework by setting $\Domain=\Rp$, $\newdim=\originaldim,$ $\matrix=\Sigma^{-1}$, $\Tdatap_{ij}(\datav)=g_i(\dataa_i)g_j(\dataa_j)/{2}$, and $\htermdatav=\sum_{j=1}^p\log|g'_j(\dataa_j)|$, cf.~\citep[Equation~(2)]{Liu09}. The normalization constant is (irrespective of symmetry) $\normtotal(\matrix)=\log((2\pi)^{p}|\matrix^{-1}|)/2<\infty$ both in the Gaussian and the non-paranormal case, so that in both cases, property~\eqref{matricescont} is satisfied.

\subsection{Non-standard Examples}\label{sec:examplesnew} The idea that Ising models as well as other standard graphical models can be written as exponential families is not new~\citep[Chapter~3.3]{Wainwright08}. However, we argue that the details of the  notions and formulations are essential, especially when it comes to establishing models for data that are not covered by standard graphical models.  We will outline this in the following. We first discuss Poisson and exponential distributions. In particular, we establish thorough proofs for the integrability of the square-root models in~\citep{Inouye16} and introduce extensions that show that the square-root is just one out of many possible operations for the interaction terms and that a variety of  distributions besides Poisson can be handled. 

\paragraph{Poisson} A main objective in systems biology is the inference of microbial interactions. The corresponding data is multivariate count data with infinite range~\citep{Faust12}. Other fields where such data is prevalent include particle physics (radioactive decay of  particles) and criminalistics (number of crimes and arrests). However, copula-based approaches to infinite count data inflict severe identifiability issues~\citep{Genest07}, while standard extensions of the independent case lead to integrability issues, see below.  Multivariate Poisson data has thus obtained considerable attention in the recent Machine Learning literature~\citep{Inouye16,Inouye15,Yang13,Yang15}, but  much less in statistics.

We show in the following that within framework~\eqref{model}, one can solve the problems associated with the standard approaches  while preserving the Poisson flavor of the individual coordinates, especially  in the limit of small interactions. For this, we use our framework with the specifications $\Domain=\{0,1,\dots\}^p,$ $\newdim=\originaldim,$  $\htermdatav=-\sum_{j=1}^p\log(\dataa_j!),$  and functions~\Tdatap\ that satisfy $\Tdatap_{ii}(\datav)=\Tdatap_{ii}(\dataa_i)=\dataa_i$ and $\Tdatap_{ij}(\datav)=\Tdatap_{ij}(\dataa_i,\dataa_j)\leq c(\dataa_i+\dataa_j)$ for some $c\in(0,\infty) $.  We note that the case $\Tdatap_{ij}(\datav)=\sqrt{\dataa_i\dataa_j}$ has been introduced in the Machine Learning literature~\citep{Inouye16}, but the technical aspects of this case have not been studied, and the general setting has not been formulated altogether.

Most importantly, we need to show property~\eqref{matricescount}. To this end, we have to verify that for all matrices~$\matrix\in\Rpp$
\begin{equation*}
  \sum_{\dataa_1,\dots,\dataa_p=0}^\infty \frac{e^{-\sum_{j}\matrix_{jj}\dataa_j-\sum_{i,j:i\neq j}\matrix_{ij}\Tdatap_{ij}(\dataa_i,\dataa_j)}}{\prod_{j}\dataa_j!}<\infty\,.
\end{equation*}
Since $-\matrix_{ij}\Tdatap_{ij}(\dataa_i,\dataa_j)\leq \tilde c\dataa_i+\tilde c\dataa_j$, where $\tilde c:=c\max_{i,j}|\matrix_{ij}|,$ a sufficient condition is
\begin{equation*}
  \sum_{\dataa_1,\dots,\dataa_p=0}^\infty \frac{e^{-\sum_{j}\left(\matrix_{jj}-2p\tilde c\right)\dataa_j}}{\prod_{j}{\dataa_j}!}<\infty\,.
\end{equation*}
Hence, defining $C(\matrix,j):=e^{-\left(\matrix_{jj}-2p\tilde c\right)}\in(0,\infty),$ $j\in\{1,\dots,p\},$ the integrability condition is implied for any~\matrix\ by the fact
\begin{equation*}
  \sum_{\dataa_1,\dots,\dataa_p=0}^\infty\ \prod_{j=1}^p\frac{C(\matrix,j)^{\dataa_j}}{\dataa_j!}=\prod_{j=1}^pe^{C(\matrix,j)}<\infty\,.
\end{equation*}
This proves property~\eqref{matricescount}.

In contrast, the corresponding integrability conditions in standard approaches to this data type inflict severe restrictions on the parameter space. To see this, recall  that the joint density of $p$ independent Poisson random variables with parameters $a_1,\dots,a_p>0$  is proportional to
\begin{align*}
  \exp\Big(\sum_{j=1}^p\log(a_j)\dataa_j-\sum_{j=1}^p\log(\dataa_j!)\Big)\,.
\end{align*}
The standard approach to include interactions is to add terms of the form $a_{ij}\dataa_i\dataa_j.$ This yields densities proportional to
\begin{align}\label{naive}
  \exp\left(\sum_{j=1}^p\log(a_j)\dataa_j+\sum_{i\neq j}a_{ij}\dataa_i\dataa_j-\sum_{j=1}^p\log(\dataa_j!)\right).
\end{align}
 The dominating terms in this expression are the interaction terms $a_{ij}\dataa_i\dataa_j$. Using Stirling's approximation, we find that $x^2/\log(x!)\to \infty$ for $x\to\infty,$ showing that the density cannot be normalized unless $a_{ij}\leq 0$ for all $i,j.$ This means that the standard approach excludes positive interactions between the nodes.

Let us finally look at the node conditionals for a specific~\Tdatap. We choose $\Tdatap_{ij}(\datav)=\sqrt{\dataa_i\dataa_j}$ for simplicity. The node conditionals become
\begin{equation*}
  f_\matrix(\dataa_j|\datav_{-j})\sim e^{ -\matrix_{jj}\dataa_j-\log(\dataa_j!)}L_{\text{Int}},
\end{equation*}
where
\begin{align*}
  L_{\text{Int}}&=e^{-\sqrt{\dataa_j}\sum_{\substack{k\in\nbj}}(\matrix_{jk}+\matrix_{kj})\sqrt{\dataa_{k}}}\,.
\end{align*}
The off-diagonal terms in~\matrix\ model the interactions of $j$ with the other nodes. If the factors~$\matrix_{jk}$ are small, $L_{\text{Int}}\approx 1,$ and thus, the node~$j$ approximately follows a Poisson distribution with parameter~$e^{-\matrix_{jj}}.$ In particular, if $\matrix$ is diagonal, the nodes are independent Poisson distributed random variables. 

In comparison, the standard approach represented by Display~\eqref{naive} results in exact Poisson node conditionals for any non-positive correlations. Conversely, it has been shown in~\citep[Proposition~1 and Lemma~1]{Chen14} that one can find a distribution with  Poisson (or exponential) node conditionals {\it only} if all interactions are non-positive. Thus, an unavoidable price for ``pure'' node conditionals is a strong, in practice typically unrealistic assumption on the parameter space. Our framework avoids this assumption and is still close to the exact Poisson (exponential) distributions if the interactions are small.

\paragraph{Exponential} The exponential case is  the counterpart of the Poisson case discussed above. In particular, the standard approach to correlated exponential data is confronted with the same integrability issues as above, while approaches via framework~\eqref{model} easily satisfy the integrability conditions. 

To model exponential data, we consider $\Domain=[0,\infty)^p,$ $\newdim=\originaldim,$  and $\hterm\equiv 0.$ Again a number of transformations~\Tdatap\ would have the desired properties; however, to avoid digression, we only consider the square-root transformations   $\Tdatap_{ij}(\datav)=\sqrt{\dataa_i\dataa_j}$ that correspond to~\citep{Inouye16}; generalization are possible along the same lines as in the Poisson case. We can now check the integrability condition~\eqref{matricescont}. Denoting the smallest $\ell_2$-eigenvalue of~\matrix\ by $\kappa(\matrix)>0,$ we find
\begin{align*}
  e^{\normtotal(\matrix)}=&\int_0^\infty\cdots\int_0^\infty e^{-\sum_{i,j=1}^p\matrix_{ij}\sqrt{\dataa_i\dataa_j}}\ d\dataa_1\dots d\dataa_p\\
\leq &\int_0^\infty\cdots\int_0^\infty e^{-\kappa(\matrix)\,\normone{\datav}}\ d\dataa_1\dots d\dataa_p\\
=&\Big(\int_0^\infty e^{-\kappa(\matrix)\dataa}d\dataa\Big)^p\\
=&\,\kappa(\matrix)^{-p}\ <\infty\,.
\end{align*}
Hence, $\normtotal(\matrix)<\infty$ for all positive definite matrices~\matrix.

In contrast, adding linear interaction terms to the independent joint density forbids positive correlations.  One can check this similarly as in the Poisson case above.

The node conditionals finally become
\begin{equation*}
  f_\matrix(\dataa_j|\datav_{-j})\sim e^{ -\matrix_{jj}\dataa_j}L_{\text{Int}}\,,
\end{equation*}
where
\begin{align*}
  L_{\text{Int}}&=e^{-\sqrt{\dataa_j}\sum_{\substack{k\in\nbj}}(\matrix_{jk}+\matrix_{kj})\sqrt{\dataa_{k}}}\,.
\end{align*}
The off-diagonal terms in~\matrix\ model the interactions of $j$ with the other nodes. If the factors~$\matrix_{jk}$ are small, $L_{\text{Int}}\approx 1,$ and thus, node~$j$ approximately follows an exponential distribution with parameter~$\matrix_{jj}.$ In particular, if $\matrix$ is diagonal, all node conditionals follow independent exponential distributions.

\paragraph{Composite Models} As an example for composite models, let us consider data with Poisson and exponential elements. Note first that the conditions on the set of matrices~\Matrix\ are different in the discrete examples and the continuous examples: In the discrete examples, we have shown $\gamma(\matrix)<\infty$ for any matrix~\matrix. In the continuous examples, we have shown $\gamma(\matrix)<\infty$ under the additional assumption that~\matrix\ is positive definite. In the case of composite models, one can interpolate the conditions. However, for the sake of simplicity, we instead assume that the matrices~\matrix\ are positive definite. We then consider  $p_1,p_2\in\{1,2,\dots\}$, $p_1+p_2=p,$ $\Domain=\{0,1,\dots\}^{p_1}\times [0,\infty)^{p_2}$, $\Matrix\subset\{\matrix\in\R^{p\times p}:\matrix\text{~symmetric, positive definite}\}$, $\Matrix$ open and convex, and $\htermdatav=-\sum_{j=1}^{p_1}\log(\dataa_j!)$. Hence, the first~$p_1$ elements of the random vector $\data$ are discrete, while the other $p_2$ elements are continuous. Using again the square-root transformation, the Poisson-type node conditionals for $j\in\{1,\dots,p_1\}$ are
\begin{equation*}
  f_\matrix(\dataa_j|\datav_{-j})\sim e^{ -\matrix_{jj}\dataa_j-\log(\dataa_j!)}L_{\text{Int}}\,,
\end{equation*}
where
\begin{align*}
  L_{\text{Int}}&=e^{-\sqrt{\dataa_j}\sum_{\substack{k\in\nbj}}(\matrix_{jk}+\matrix_{kj})\sqrt{\dataa_{k}}}\,.
\end{align*}
The exponential-type node conditionals for $j\in\{p_1+1,\dots,p_1+p_2\}$ have the corresponding form. The expressions highlight that the densities can include interactions between the discrete and continuous elements of~\data, while the Poisson/exponential-flavors of the nodes are still preserved.

To show that $\gamma(\matrix)<\infty,$ we proceed similarly as in the examples above. More precisely, denoting  the smallest $\ell_2$-eigenvalue of $\matrix$ by $\kappa(\matrix)>0$, we find
\begin{align*}
  e^{\normtotal(\matrix)}=&  \sum_{\dataa_1,\dots,\dataa_{p_1}=0}^\infty\ \int_0^\infty\cdots\int_0^\infty \frac{e^{-\sum_{i,j=1}^p\matrix_{ij}\sqrt{\dataa_i\dataa_j}}}{\prod_{j=1}^{p_1}\dataa_j!}\ d\dataa_{p_1+1}\dots d\dataa_{p}\\
\leq &\sum_{\dataa_1,\dots,\dataa_{p_1}=0}^\infty\ \int_0^\infty\cdots\int_0^\infty \frac{e^{-\kappa(\matrix)\,\normone{\datav}}}{\prod_{j=1}^{p_1}\dataa_j!}\ d\dataa_{p_1+1}\dots d\dataa_p\\
= &\sum_{\dataa_1,\dots,\dataa_{p_1}=0}^\infty\ \frac{e^{-\kappa(\matrix)\,(\dataa_1+\dots+\dataa_{p_1})}}{\prod_{j=1}^{p_1}\dataa_j!}\\
&~~~~~\times \int_0^\infty\cdots\int_0^\infty e^{-\kappa(\matrix)\,(\dataa_{p_1+1}+\dots+\dataa_{p_2})}\ d\dataa_{p_1+1}\dots d\dataa_p\\
=&\Big(e^{e^{-\kappa(\matrix)}}\Big)^{p_1}\Big(\int_0^\infty e^{-\kappa(\matrix)\dataa}d\dataa\Big)^{p_2}
<\infty\,.
\end{align*}

\paragraph{Laplace and Beyond} There is much room for our creativity in constructing models. For example, we can readily establish models for Laplace (double-exponential) data by inserting absolute values throughout, for example, $\Tdatap_{ij}(\datav)=\sqrt{|\dataa_i\dataa_j|}.$ Indeed,  again denoting the smallest $\ell_2$-eigenvalue of~\matrix\ by $\kappa(\matrix)>0,$ we find
\begin{align*}
  e^{\normtotal(\matrix)}=&\int_{-\infty}^\infty\cdots\int_{-\infty}^\infty e^{-\sum_{i,j=1}^p\matrix_{ij}\sqrt{|\dataa_i\dataa_j|}}\ d\dataa_1\dots d\dataa_p\\
\leq &\int_{-\infty}^\infty\cdots\int_{-\infty}^\infty e^{-\kappa(\matrix)\,\normone{\datav}}\ d\dataa_1\dots d\dataa_p\\
=&\Big(2\int_{0}^\infty e^{-\kappa(\matrix)\dataa}d\dataa\Big)^p\\
=&\,(2/\kappa(\matrix))^{p}\ <\infty\,.
\end{align*}
Hence, $\normtotal(\matrix)<\infty$ for all positive definite matrices~\matrix.  In general, the essentially only limit is that to ensure integrability, the interaction terms have to be of ``smaller order'' than the terms that correspond to the independent case. 

\section{Estimation}\label{sec:estimation}We now turn to estimation based on maximum likelihood. 
In Section~\ref{sec:estimator}, we show that maximum likelihood estimation has desirable properties in our framework. 
In Section~\ref{sec:algorithm}, we propose a sampling-based approximation algorithm for computing the maximum likelihood estimator.

\subsection{Maximum Likelihood Estimation}\label{sec:estimator} We study maximum likelihood estimation in our framework. For this, we assume we are given $n$ i.i.d.\@ data samples $\datasseq$ from a distribution of the form~\eqref{model}. Also, we assume we are given the model specifications~$\Domain,\measure,\Tdatap,\hterm$ and the parameter space $\Matrix$; that is, we assume that the model class is known. In contrast, the correct model parameters specified in the matrix~\matrixtrue\ are unknown. Our goal is to estimate~\matrixtrue\ from the data.

As a toy example, consider data on $p$ different populations of freshwater fish in $n$ similar lakes. More specifically, consider vector-valued observations $\datasseq\in\{0,1,\dots\}^p$, where $(\data^i)_j$ is the number of fish of type $j$ in lake $i$. We want to use these data to uncover the relationships among the different populations. A model suited for this task is the Poisson model discussed earlier. For example, we might set $\Domain=\{0,1,\dots\}^p,$ $\newdim=\originaldim,$ $\{\matrix\in\R^{p\times p}:\matrix\text{~symmetric}\},$ $\Tdatap_{ij}(\datav)=\sqrt{\dataa_i\dataa_j},$ and $\htermdatav=-\sum_{j=1}^p\log(\dataa_i!)\,$. The relationships among the fish populations are then encoded in~\matrix, which then needs to be estimated from the observations.

Before heading on, we add some convenient notation. We summarize the data in $\datatotal:=(\datasseq)$ and denote the corresponding function argument by $\datatotala:=(\datasseqa)$ for $\datasseqa\in\Domain.$ The generalized Gram matrix is denoted by
\begin{equation*}
 \Tdatae(\datatotala):=\sino \Tdatap(\datav^i)\,. 
\end{equation*}
The negative joint log-likelihood function $-\ell_\tensor$ for $n$ i.i.d random vectors corresponding to the model~\eqref{model} is finally given by
  \begin{equation*}
-\ell_\tensor(\datatotala)= n\inprodtr{\tensor}{\Tdatae(\datatotala)}-\si\hterm(\datav^i)+n\normtotal(\tensor)\, .
\end{equation*}

We can now state the maximum likelihood estimator and its properties. For further reference, we first state the essence of the previous discussion in the following lemma.
\begin{lemma}[Log-likelihood]\label{loglike}
Given any $\tensor\in\Matrixint$,  the negative joint log-likelihood function~$-\ell_\tensor$  of $n$ i.i.d.\@ random vectors distributed according to~$f_\matrix$ in~\eqref{model} can be expressed by
  \begin{equation*}
-\ell_\tensor(\datatotala)= n\inprodtr{\tensor}{\Tdatae(\datatotala)}+n\normtotal(\tensor)+c\, ,
\end{equation*}
where $c\in\R$ does not depend on $\tensor.$
\end{lemma}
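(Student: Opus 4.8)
The plan is to unfold the joint likelihood under the i.i.d.\ assumption and then reorganize the result using bilinearity of the trace inner product. Since $\datasseq$ are independent with common density $f_\tensor$ from~\eqref{model}, the joint density factorizes as $\prod_{i=1}^n f_\tensor(\datav^i)$, so the joint log-likelihood is the sum of the individual log-densities. First I would substitute the explicit form~\eqref{model} into each factor, using $\log f_\tensor(\datav^i) = -\inprodtr{\tensor}{\Tdatap(\datav^i)} + \hterm(\datav^i) - \normtotal(\tensor)$, and then negate and sum over $i$ to obtain
\begin{equation*}
-\ell_\tensor(\datatotala) = \si \inprodtr{\tensor}{\Tdatap(\datav^i)} - \si\hterm(\datav^i) + n\normtotal(\tensor)\,.
\end{equation*}

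Next I would exploit that $\inprodtr{\cdot}{\cdot}$ is linear in its second argument to pull the summation inside the inner product, giving $\si\inprodtr{\tensor}{\Tdatap(\datav^i)} = \inprodtr{\tensor}{\si\Tdatap(\datav^i)} = n\inprodtr{\tensor}{\Tdatae(\datatotala)}$, where the last equality is just the definition $\Tdatae(\datatotala) = \sino \Tdatap(\datav^i)$ of the generalized Gram matrix. Collecting the remaining two terms, the claim follows with the choice $c := -\si\hterm(\datav^i)$.

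There is no substantive obstacle here; the statement is essentially a bookkeeping rearrangement of the display preceding the lemma, isolating the parameter-free part of the log-likelihood. The only point worth verifying is that $c$ is a well-defined real number that does not depend on $\tensor$: this holds because $\hterm\colon\Domain\to\R$ is a fixed real-valued function of the data alone, so $c$ is a finite sum of reals determined entirely by the realizations $\datasseqa$ and not by the parameter. Finiteness of $\normtotal(\tensor)$ for $\tensor\in\Matrixint$, which is implicitly needed for $-\ell_\tensor$ to be finite, is guaranteed directly by the definition of $\Matrixint$.
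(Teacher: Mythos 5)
Your proposal is correct and matches the paper's own derivation, which obtains the identical display immediately before the lemma by summing the i.i.d.\ log-densities and absorbing the data-only term $-\si\hterm(\datav^i)$ into the constant $c$. The lemma is, as you note, a bookkeeping restatement, and your handling of the linearity of the trace inner product and the finiteness of $\normtotal(\tensor)$ on $\Matrixint$ is exactly what is needed.
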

\noindent Motivated by Lemma~\ref{loglike}, we  introduce the maximum likelihood estimator of \tensortrue\ by
\begin{equation}\label{def:mle}
  \tensorest:=\argmin_{\matrixtoy\in\Tensor}\big\{-\ell_\matrixtoy(\datatotal)\big\}=\argmin_{\matrixtoy\in\Tensor}\big\{\inprodtr{\matrixtoy}{\Tdatae(\datatotal)}+\normtotal(\matrixtoy)\big\}\,.
\end{equation}
The estimator exists in all generic examples. More generally, under our assumption  that  $\matrix\in\Matrix\subset\Matrixint$ and $\Matrix$ is open and convex, and the minimizer exists for  $n$ sufficiently large, cf.~\citep{Berk72}. In particular, the objective function is convex, and its derivatives can be computed explicitly.
\begin{lemma}[Convexity]\label{convexity}
For any $\datatotala\in\Domain^p,$ the function
  \begin{align*}
\Matrixint\to&\ \R\\
\tensor\mapsto&\ \inprodtr{\tensor}{\Tdatae(\datatotala)}+\normtotal(\tensor)
\end{align*}  
is convex.
\end{lemma}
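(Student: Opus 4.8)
The plan is to split the objective into its two summands and treat them separately. For fixed data $\datatotala$, the map $\matrix\mapsto\inprodtr{\matrix}{\Tdatae(\datatotala)}$ is, by bilinearity of the trace inner product, an affine (indeed linear) function of $\matrix$, and affine functions are convex, so this term poses no difficulty. The entire content of the lemma therefore lies in showing that the log-normalization
\begin{equation*}
\normtotal(\matrix)=\log\int_\Domain e^{-\inprodtr{\matrix}{\Tdatap(\datav)}+\hterm(\datav)}\,d\measure
\end{equation*}
is convex on $\Matrixint$; once this is established, the sum of an affine function and a convex function is convex, finishing the proof. I note first that convexity of the domain $\Matrixint$ is already guaranteed by Property~1 of Lemma~\ref{propertiesfamily}, so any convex combination of two points of $\Matrixint$ again lies in $\Matrixint$, and the integrals appearing below are finite.

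To prove $\normtotal$ is convex, I would fix $\matrix_1,\matrix_2\in\Matrixint$ and $t\in[0,1]$ and set $\matrix_t:=t\matrix_1+(1-t)\matrix_2$. Using bilinearity of $\inprodtr{\cdot}{\cdot}$, the integrand at $\matrix_t$ factors as a product of powers,
\begin{equation*}
e^{-\inprodtr{\matrix_t}{\Tdatap(\datav)}+\hterm(\datav)}=\bigl(e^{-\inprodtr{\matrix_1}{\Tdatap(\datav)}+\hterm(\datav)}\bigr)^{t}\bigl(e^{-\inprodtr{\matrix_2}{\Tdatap(\datav)}+\hterm(\datav)}\bigr)^{1-t}\,.
\end{equation*}
The key step is then H\"older's inequality with conjugate exponents $1/t$ and $1/(1-t)$ applied to these two factors, which yields
\begin{equation*}
\int_\Domain e^{-\inprodtr{\matrix_t}{\Tdatap(\datav)}+\hterm(\datav)}\,d\measure\leq\Bigl(\int_\Domain e^{-\inprodtr{\matrix_1}{\Tdatap(\datav)}+\hterm(\datav)}\,d\measure\Bigr)^{t}\Bigl(\int_\Domain e^{-\inprodtr{\matrix_2}{\Tdatap(\datav)}+\hterm(\datav)}\,d\measure\Bigr)^{1-t}\,.
\end{equation*}
Recognizing each integral as $e^{\normtotal(\cdot)}$ and taking logarithms gives exactly $\normtotal(\matrix_t)\leq t\normtotal(\matrix_1)+(1-t)\normtotal(\matrix_2)$, the desired convexity; the degenerate cases $t\in\{0,1\}$ are immediate.

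The main obstacle is essentially bookkeeping rather than structural: I must check that H\"older's inequality applies cleanly, i.e. that the two factors lie in $L^{1/t}$ and $L^{1/(1-t)}$ respectively, which is precisely the statement that $\normtotal(\matrix_1),\normtotal(\matrix_2)<\infty$ — and this is guaranteed by $\matrix_1,\matrix_2\in\Matrixint$. I would also confirm finiteness of the right-hand bound so that the final logarithm is well-defined, which follows either directly from the displayed inequality or from convexity of $\Matrixint$. No deeper input is required: this is the classical convexity of the cumulant generating (log-partition) function of an exponential family, here specialized to the trace-inner-product parametrization by~$\matrix$.
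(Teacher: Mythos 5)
Your proof is correct, but it takes a genuinely different route from the paper's. The paper deduces Lemma~\ref{convexity} from Lemma~\ref{derivative}: it computes the second partial derivatives of $\tensor\mapsto\inprodtr{\tensor}{\Tdatae(\datatotala)}+\normtotal(\tensor)$ and observes that the Hessian quadratic form at $\tensor$ in any direction $\tensor'$ equals $n\,\E_\tensor\inprodtr{\tensor'}{\Tdatae(\datatotal)-\E_\tensor\Tdatae(\datatotal)}^2\geq 0$, i.e.\ convexity via positive semi-definiteness of the second derivative on the open convex set \Matrixint. You instead isolate the affine term and prove convexity of the log-normalization $\normtotal$ directly by H\"older's inequality with conjugate exponents $1/t$ and $1/(1-t)$ --- the classical log-convexity argument for cumulant generating functions. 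Your version is sound: the factorization of the integrand is exact because $\hterm=t\hterm+(1-t)\hterm$, the integrability hypotheses for H\"older are precisely $\normtotal(\matrix_1),\normtotal(\matrix_2)<\infty$, and the argument needs no differentiability whatsoever, so it sidesteps the interchange of differentiation and integration on which Lemma~\ref{derivative} (and hence the paper's proof) implicitly rests. Note also that your displayed inequality is strictly stronger than the paper's proof of Property~1 of Lemma~\ref{propertiesfamily} (which uses convexity of the exponential to bound $e^{\normtotal}$, yielding convexity of the set \Matrixint\ but not of $\normtotal$ itself); in particular, finiteness of your right-hand side re-derives $\normtotal(\matrix_t)<\infty$, so your citation of that lemma is convenient but not actually needed. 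What the paper's route buys in exchange is economy elsewhere: the same Hessian computation is reused as the Fisher information in Theorem~\ref{normality} and in the remark on its invertibility, whereas your argument, while more elementary and self-contained, yields convexity alone.
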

\begin{lemma}[Derivatives]\label{derivative}
For any $\datatotala\in\Domain^p,$ the function
  \begin{align*}
\Matrixint\to&\ \R\\
\tensor\mapsto&\ \inprodtr{\tensor}{\Tdatae(\datatotala)}+\normtotal(\tensor)
\end{align*}  
 is twice differentiable with partial derivatives
 \begin{equation*}
\frac{\partial}{\partial \tensor{\tensorindex}}\left(\inprodtr{\tensor}{\Tdatae(\datatotala)}+\normtotal(\tensor)\right)=\Tdatae{\tensorindex}(\datatotala)-\E_\tensor\Tdatae{\tensorindex}(\datatotal)
      \end{equation*}
and
\begin{align*}
&\frac{\partial}{\partial \tensor\tensorindex}\frac{\partial}{\partial \tensor\tensorindexp}\left(\inprodtr{\tensor}{\Tdatae(\datatotala)}+\normtotal(\tensor)\right)\\
&~~~~~~~=n\,\E_\tensor\left[\big(\Tdatae\tensorindex(\datatotal)-\E_\tensor\Tdatae\tensorindex(\datatotal)\big)\big(\Tdatae\tensorindexp(\datatotal)-\E_\tensor\Tdatae\tensorindexp(\datatotal)\big)\right]
\end{align*}
for $i,j,k,l\in\otq.$
\end{lemma}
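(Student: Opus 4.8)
The plan is to split the function into its two summands. The map $\tensor\mapsto\inprodtr{\tensor}{\Tdatae(\datatotala)}=\sum_{k,l}\tensor\tensorindexp\Tdatae\tensorindexp(\datatotala)$ is affine, so it contributes exactly $\Tdatae\tensorindex(\datatotala)$ to each first partial derivative and nothing to the second derivatives. All the real content therefore sits in the log-normalization $\normtotal(\tensor)=\log Z(\tensor)$ with $Z(\tensor):=\int_\Domain e^{-\inprodtr{\tensor}{\Tdatap(\datav)}+\hterm(\datav)}\,d\measure$, which is the cumulant generating function of the statistic $\Tdatap(\data)$; I would differentiate it with the standard exponential-family calculus.

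First I would justify differentiation under the integral sign. Fix $\tensor\in\Matrixint$. Since $\Matrixint$ is open (it is an interior by definition) and convex (Lemma~\ref{propertiesfamily}, Property~1), there is a closed ball around $\tensor$ inside $\Matrixint$. The integrand obtained after bringing down one or two factors of $\Tdatap$ is a product of entries of $\Tdatap(\datav)$ times $e^{-\inprodtr{\tensor'}{\Tdatap(\datav)}+\hterm(\datav)}$; using the elementary bound $|\Tdatap\tensorindex(\datav)|\le\epsilon^{-1}\bigl(e^{\epsilon\Tdatap\tensorindex(\datav)}+e^{-\epsilon\Tdatap\tensorindex(\datav)}\bigr)$ for a small $\epsilon>0$, each such factor can be absorbed into the exponent as a perturbation of $\tensor'$ by $\pm\epsilon$ in the $(i,j)$ entry. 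For $\epsilon$ small and the ball shrunk slightly, all these perturbed parameters remain in $\Matrixint$, so the corresponding integrals are finite and furnish a $\tensor'$-independent integrable dominator on a neighborhood of $\tensor$. This, together with the existence of moments of all orders (Lemma~\ref{propertiesfamily}, Property~2), legitimizes passing $\partial/\partial\tensor\tensorindex$ inside the integral twice and guarantees the second-order integrals are finite. Alternatively one may simply invoke the classical smoothness of cumulant generating functions on the interior of the natural parameter space, cf.~\cite{Berk72}.

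With the interchange in hand, the computation is direct. Differentiating once gives $\partial Z/\partial\tensor\tensorindex=-\int_\Domain\Tdatap\tensorindex(\datav)e^{-\inprodtr{\tensor}{\Tdatap(\datav)}+\hterm(\datav)}\,d\measure$; dividing by $Z(\tensor)$ and recognizing the density $f_\tensor$ yields $\partial\normtotal/\partial\tensor\tensorindex=-\E_\tensor\Tdatap\tensorindex(\data)$, which by the i.i.d. assumption equals $-\E_\tensor\Tdatae\tensorindex(\datatotal)$. Adding the affine term produces the claimed first derivative $\Tdatae\tensorindex(\datatotala)-\E_\tensor\Tdatae\tensorindex(\datatotal)$. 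For the second derivative I would apply the quotient rule to $-A\tensorindex/Z$, where $A\tensorindex:=\int_\Domain\Tdatap\tensorindex(\datav)e^{-\inprodtr{\tensor}{\Tdatap(\datav)}+\hterm(\datav)}\,d\measure$; the two resulting terms combine into $\E_\tensor[\Tdatap\tensorindex(\data)\Tdatap\tensorindexp(\data)]-\E_\tensor\Tdatap\tensorindex(\data)\,\E_\tensor\Tdatap\tensorindexp(\data)=\operatorname{Cov}_\tensor\bigl(\Tdatap\tensorindex(\data),\Tdatap\tensorindexp(\data)\bigr)$.

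It then remains to match this covariance to the stated empirical form. Writing $\Tdatae\tensorindex(\datatotal)-\E_\tensor\Tdatae\tensorindex(\datatotal)=\tfrac1n\sum_{m=1}^n\bigl(\Tdatap\tensorindex(\data^m)-\E_\tensor\Tdatap\tensorindex(\data)\bigr)$, expanding the product of the two centered averages, and using independence to discard the off-diagonal terms, I get that $n\,\E_\tensor\bigl[(\Tdatae\tensorindex(\datatotal)-\E_\tensor\Tdatae\tensorindex(\datatotal))(\Tdatae\tensorindexp(\datatotal)-\E_\tensor\Tdatae\tensorindexp(\datatotal))\bigr]$ equals exactly the covariance above, the $n$ prefactor cancelling the $1/n$ from the empirical average. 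I expect the only genuine obstacle to be the first step---producing a clean uniform dominator so that differentiating under the integral (twice) is rigorous---while everything after it is bookkeeping with the exponential-family identities and the i.i.d. structure.
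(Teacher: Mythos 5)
Your proposal is correct and follows essentially the same route as the paper's proof: the affine term contributes $\Tdatae\tensorindex(\datatotala)$, the log-normalization is differentiated as a cumulant generating function to give $-\E_\tensor\Tdatap\tensorindex(\data)$ and then the covariance $\operatorname{Cov}_\tensor\big(\Tdatap\tensorindex(\data),\Tdatap\tensorindexp(\data)\big)$, and independence converts this into the stated factor-$n$ form with $\Tdatae$. The only difference is that you explicitly justify the interchange of differentiation and integration (via the bound $|t|\le\epsilon^{-1}(e^{\epsilon t}+e^{-\epsilon t})$, openness of \Matrixint, and a dominated-convergence argument), a step the paper performs without comment, so your write-up is if anything slightly more complete.
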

\noindent \noindent Convexity and the explicit derivatives are desirable for both optimization and theory. From an optimization perspective, the two properties are valuable, because they render the objective function amenable to gradient-type minimization. From a theoretical perspective, the two properties are valuable, because they  imply that
\begin{equation*}
  \matrix=\argmin_{\matrixtoy\in\Tensor}\big\{\E_{\matrix}\big[\inprodtr{\matrixtoy}{\Tdatae(\datatotal)}+\normtotal(\matrixtoy)\big]\big\}\,,
\end{equation*}
showing that \tensorest\ is a standard M-estimator, and because they imply that \tensorest\ can be written as a Z-estimator (note that \tensorest\ is necessarily in the interior of \Tensor) with criterion
\begin{equation*}
\Tdatae(\datatotal)=\E_\tensorest\Tdatae(\datatotal)\,.
\end{equation*}

A simple special case is the multivariate Gaussian model.  Recall that in this case, \matrixtrue\ is the inverse of the usual population covariance matrix. Moreover, one can check that
\begin{equation*}
\Tdatae(\datatotala)=\sino\datav^i \datav^i{}\tp
\end{equation*}
and $\matrixest=\Tdatae(\datatotal)^{-1}.$ Hence, in this case, the estimator \matrixest\ is the inverse of the (usual) sample covariance matrix.

\begin{remark}
	 The maximum likelihood estimator of~\matrix\ is asymptotically normal with covariance equal to the inverse Fisher information. In particular,  consistency and asymptotic normality of the maximum likelihood estimator can be proved following the arguments in the classical paper~\citep{Berk72}, see especially~\cite[Theorems~4.1 and 6.1]{Berk72}. We refer to that paper for details.
\end{remark}

\subsection{Algorithm}\label{sec:algorithm}
The main challenge in computing the maximum likelihood estimator is the unconventional normalization term. 
We address this challenge by approximating the objective function using a sampling-based technique. 
In this section, we describe the corresponding algorithm.

We denote the objective function of the maximum likelihood estimator in Equation~\eqref{def:mle} as
\begin{equation*}
	g(\matrixtoy) := \inprodtr{\matrixtoy}{\Tdatae(\datatotal)}+\normtotal(\matrixtoy).
\end{equation*}
The normalization term $\normtotal(\matrixtoy)$, in general, does not have a closed-form formula and, therefore, makes the objective function hard to compute exactly. 
However, we show in the following that it can be feasibly approximated.
Adding the constant term $\normtotal(\tensor_0)$, where $\tensor_0$ is a pre-specified constant parameter matrix,  to the objective function of~\eqref{def:mle} yields an equivalent definition of the maximum likelihood estimator as
\begin{equation}
\tensorest =\argmin_{\matrixtoy\in\Tensor}\big\{\inprodtr{\matrixtoy}{\Tdatae(\datatotal)}+\normtotal(\matrixtoy) - \normtotal(\tensor_0)\big\}\,.
\end{equation}
Algebraic transformation reveals 
\begin{align*}
\normtotal(\matrixtoy)-\normtotal(\tensor_0) = \log \E_{\tensor_0}\Bigl( e^{-\inprodtr{\matrixtoy-\tensor_0}{T(\datav)}} \Bigr).
\end{align*} 
The finite expectation can be approximated by an empirical mean based on some sample set $\mcset$ from the distribution $f_{\tensor_0}(\datav):=\exp(-\inprodtr{\tensor_0}{\Tdatap(\datav)}+\hterm(\datav)-\normtotal(\tensor_0))$. 
By the strong law of large numbers, when the cardinality of the set $\mcset$  (denoted as $|\mcset|$) goes to infinity, the empirical mean approximates the expectation well:
	\begin{equation*}
	\log \frac{1}{|\mcset|}\sum_{\datamc \in \mcset}e^{-\inprodtr{\tensor-\tensor_0}{T(\datamc)}}  \xrightarrow[]{a.s}
	\log \E_{\tensor_0}\Bigr( e^{-\inprodtr{\tensor-\tensor_0}{T(\datav)}} \Bigl).
	\end{equation*}
Hence, in practice, we solve the approximate problem
\begin{equation}\label{eq: approximate_MLE}
\tensorest\approx
\argmin_{\matrixtoy\in\Tensor}\big\{\widetilde{g}(\matrixtoy) \big\}\,,
\end{equation}
where
\begin{equation*}
	\widetilde{g}(\matrixtoy) := \inprodtr{\matrixtoy}{\Tdatae(\datatotal)}+ \log \frac{1}{|\mcset|}\sum_{\datamc \in \mcset}e^{-\inprodtr{\matrixtoy-\tensor_0}{\Tdatap(\datamc)}}\,.
\end{equation*}
We apply gradient descent to solve the problem of~\eqref{eq: approximate_MLE}.  
The gradient with respect to $\matrixtoy$ is
\begin{equation}\label{approx gradient}
\nabla \widetilde{g}(\matrixtoy)= \Tdatae(\datatotal) -  \frac{\sum_{\datamc \in \mcset} \Tdatap(\datamc) e^{-\inprodtr{\matrixtoy-\tensor_0}{\Tdatap(\datamc)}}}{ \sum_{\datamc \in \mcset} e^{-\inprodtr{\matrixtoy-\tensor_0}{\Tdatap(\datamc)}}} .
\end{equation}
\begin{remark}
	The gradient in~\eqref{approx gradient} can also be considered as an instantiation of self-normalized importance sampling~\citep{owen2016monte}.
	Lemma~\ref{derivative} gives the gradient of $g(\matrixtoy)$ in the form of
	\begin{equation*}
		\nabla g(\matrixtoy) = \Tdatae(\datatotal)-\E_\matrixtoy\Tdatae(\datatotal) = \Tdatae(\datatotal)-\E_\matrixtoy\Tdatap(\data) \,.
	\end{equation*}
	When $\E_\matrixtoy\Tdatap(\data)$ lacks an algebraic expression and $f_{\matrixtoy}$ is inconvenient to sample from, importance sampling~\citep{owen2016monte} is useful to approximate the expectation term $\E_\matrixtoy\Tdatap(\data)$. The idea of importance sampling is to draw samples from a biased distribution and obtain the desired $f_{\matrixtoy}$ by adjusting the weights for the drawn samples. Self-normalized importance sampling refers to the special case when weights are normalized by their sum. 
Equation~\eqref{approx gradient} reflects the same idea and uses the pre-specified $f_{\tensor_0}$ as the biased sampling distribution. 
\end{remark}

The choice of $\tensor_0$ is essential for the finite-sample performance of the approximation. 
Our two main considerations are: 
First, the sampling distribution~$f_{\tensor_0}$ should be straightforward for generating samples. 
Second, it should lead to balanced weights and a small variance of $\widehat{\E}_\matrixtoy \Tdatap(\data)$;
if weights concentrate in just a few samples, we have effectively only got these observations, resulting in large variability of the approximation~\citep{owen2016monte}. 
To incorporate the two considerations, we propose
\begin{equation}
	\label{eq: M_0 optimization}
	\tensor_0 :=  \argmin_{\matrixtoy\in\Tensor}\big\{\inprodtr{\matrixtoy}{\Tdatae(\datatotal)}+\normtotal(\matrixtoy) \big\}~~ \mbox{subject to}~~ \matrixtoy_{kl} = 0,~~ \forall k\neq l.
\end{equation}
We restrict the parameter to a diagonal matrix, by which we presume mutual independence among all coordinates and disentangle the objective function. 
Hence, both the optimizing problem~\eqref{eq: M_0 optimization} and the task of sampling from $f_{\tensor_0}$ can be handled for each coordinate separately, and each coordinate reduces to a standard univariate exponential family distribution. Besides, $\tensor_0$ is the diagonal matrix closest to the actual parameter. When the off-diagonal entries of the actual parameter matrix are small, weights of the drawn samples are expected to be reasonable.

Algorithm~\ref{algorithm} summarizes our computational pipeline. 
In the full version, which is stated in Appendix~\ref{sec: full algorithm}, a backtracking line search selects the step size adaptively and incorporates the domain constraint as needed (for example, the positive definite requirement for the case of continuous measures).

\begin{algorithm}
\caption{Solving for the maximum likelihood estimator}
\label{algorithm}
\SetKwInOut{Input}{Input}
\SetKwInOut{Output}{Output}
\tcp{$\eta$: step size}
\Input{$\Tdatae(\datatotal),\eta > 0$}
\Output{$\tensorest$}
\tcp{Solve for $\tensor_0$}
	
	$\tensor_0 \leftarrow \mathbf{0}_{p\times p}$\;
	
	\For{$i = 1, \dots, p$}{
		$(\tensor_0)_{ii} \leftarrow \argmin_{m\in\R}\Big\{m\Tdatae_{ii}(\datatotal)+  \log \int \exp \big(-m\Tdatap_{ii}(\datav) + \xi(\dataa_i)\big)d\dataa_i \Big\}$\;
	}
\tcp{Generate sample set $\mcset$ from $f_{\tensor_0}=\prod_{i=1}^p f_{(\tensor_0)_{ii}}(\dataa_i)$}
	\For{$i = 1, \dots, p$}{
	Generate 10,000 random samples from $f_{(\tensor_0)_{ii}}(\dataa_i)$ for the $i$-th coordinate\;
}
\tcp{Apply gradient descent with constant step size to~\eqref{eq: approximate_MLE}}
$k\leftarrow0$\;
$\matrixtoy_k\leftarrow\tensor_0$\;
\Repeat{$\big|\widetilde{g}({\matrixtoy_{k}}) - \widetilde{g}({\matrixtoy_{k-1}})\big| < 10^{-4}$}{
	$k \leftarrow k+1$\;
	$\nabla \widetilde{g}(\matrixtoy_{k-1}) \leftarrow \Tdatae(\datatotal) -  \frac{\sum_{\datamc \in \mcset} \Tdatap(\datamc) e^{-\inprodtr{\matrixtoy_{k-1}-\tensor_0}{\Tdatap(\datamc)}}}{ \sum_{\datamc \in \mcset} e^{-\inprodtr{\matrixtoy_{k-1}-\tensor_0}{\Tdatap(\datamc)}}}$\;
	$\matrixtoy_{k} \leftarrow \matrixtoy_{k-1} - \eta\nabla \widetilde{g}(\matrixtoy_{k-1})$\;
}
$\tensorest \leftarrow \matrixtoy_{k}$\;
\end{algorithm}

\section{Simulation Studies}\label{sec:simulation}
Exponential trace models apply to a large variety of multivariate data that have correlated coordinates. 
The framework is especially useful for data that are discrete, heavy-tailed, or composed of different data types. 
We consider the following four model-types in simulations: Poisson, Exponential, Poisson-Bernoulli (a composite of Poisson and Bernoulli), and Poisson-Gaussian (a composite of Poisson and Gaussian).
Such types of models are useful in practice but, to date, have proven challenging to characterize and estimate.

\subsection{Settings}\label{sec: sim setting}
We follow the discussion of Section~\ref{sec:examplesnew} and consider the square-root transformation for non-Gaussian data as one example,
that is, we set $\Tdatap_{ij}(\datav) = t_i(\dataa_i)t_j(\dataa_j),$ where $t_i(\dataa_i) = \sqrt{\dataa_i}$ for non-Gaussian coordinates and  $t_i(\dataa_i) = \dataa_i$ for Gaussian coordinates.

In the following, we describe the specific graph structures for discrete data, continuous data, and composite data of both types. 
The discrete data category also covers composite data of different discrete types (for example, Poisson-Bernoulli).

\subsection*{Discrete Data}
We consider \er\  random graphs and generate the corresponding $p \times p$ parameter matrix $\tensor$ in the following manner. Let $c_{0}$ and $c_{1}$ be two constants ($c_1 \neq 0$). We set the diagonal entries to $\tensor_{ii} = c_0.$
For $i \neq j$, the off-diagonal entries are independent and identically distributed as
\begin{equation}\label{eq: discrete Mij}
	\tensor_{ij} = \tensor_{ji} = 
	\begin{cases}
		\phantom{-}c_1 & \text{with probability } 1/p,\\
		-c_1 & \text{with probability } 1/p,\\
		\phantom{-}0 & \text{with probability } 1-2/p.
	\end{cases}   
\end{equation}
The corresponding \er\  random graph has $p-1$ edges in expectation, among which half represent positive interactions and the other half  negative ones. 

\subsection*{Continuous Data}
We consider again \er\ random graphs. In addition, we generate strictly diagonally dominant matrices with positive diagonal entries for the parameter matrix $\tensor$.  
It can be shown that the $\tensor$'s are positive definite and satisfy the integrability condition. More specifically, we first generate a $p \times p$ adjacency matrix with i.i.d.\@ off-diagonal entries
\begin{equation*}
	A_{ij} = A_{ji} = 
	\begin{cases}
		\phantom{-}1 & \text{with probability } 1/p,\\
		-1 & \text{with probability } 1/p,\\
		\phantom{-}0 & \text{with probability } 1-2/p,
	\end{cases}   
\end{equation*}
We denote the maximum node degree by $s := \max_{i} \sum_{j\neq i}|A_{ij}|.$ 
Then, a positive definite $\tensor$ can be generated by setting the diagonal entries to 1 and 
the off-diagonal entries to
\begin{equation*}
	\tensor_{ij} = \tensor_{ji} = 1/(s+0.1) A_{ij}.
\end{equation*}
The corresponding \er\ random graph has $p-1$ edges in expectation, among which half represent positive interactions and the other half  negative ones. 

\subsection*{A Composite of Discrete and Continuous Data}
We consider even values of $p$, with the first $p_1 = p/2$ coordinates discrete, and the remaining $p_2 = p/2$ coordinates continuous.
The parameter matrix in blockwise format is
\begin{equation*}
	\tensor=
	\left[
	\begin{array}{cc}
		\tensor_{11} & \tensor_{12} \\
		\tensor_{12}^\top & \tensor_{22}
	\end{array}
	\right],
\end{equation*}
where $\tensor_{11}, \tensor_{22}$ represent the conditional dependences among $p_1$ discrete and $p_2$ continuous coordinates, respectively. 
The remaining block $\tensor_{12}$ describes the conditional dependences between discrete and continuous coordinates. When the continuous node conditionals follow a Gaussian distribution, the integrability condition is satisfied for all $\tensor \in \R^{p \times p}$ such that $\tensor_{22}$ is positive definite. Therefore, we generate $\tensor_{22}$ in the above described continuous case to guarantee its positive definiteness while generating $\tensor_{11}$ and $\tensor_{12}$ in the above described discrete case.

\subsection{Results}
We evaluate the maximum likelihood estimators in terms of edge recovery by studying average ROC (receiver operating characteristic) curves based on thresholding maximum likelihood estimates. 
Each average ROC curve is taken over 50~individual ROC curves that correspond to 50~different \er\  random graphs. ROC curves are combined using horizontal-averaging via the \texttt{R} package \texttt{ROCR}~\citep{sing2005rocr}. Since Gaussian graphical models are currently widely used, even in cases with obvious misspecification (eg. count data)~\citep{zhao2019cancer},
we compare the maximum likelihood estimators of the exponential trace model to that of the (misspecified) Gaussian graphical model.

The graph structures are described in Section~\ref{sec: sim setting}. Details regarding the data generation approaches are deferred to Appendix~\ref{sec:data generation}. 
We use $n=250$ independent observations to recover the conditional dependence of $p=20$ variables.
The diagonal entry is $c_0 = -1$ and the off-diagonal entry is $c_1 = 0.3$. 
We show the average ROC curves of Poisson, Exponential, Poisson-Bernoulli, and Poisson-Gaussian in Figures~\ref{fig:pure_data_ROC} and~\ref{fig:composite_data_ROC}. In addition, we explore scenarios with diagonal entry $c_0 \in\{0,-0.5,  -1,  -1.5,  -2\}$ and off-diagonal entry $c_1 \in\{ 0.1,  0.2,  0.3,  0.4,  0.5\}:$ we show the differences in AUC of average ROC curves between the exponential trace model and the Gaussian graphical model in Figures~\ref{fig:pure_data_AUC} and~\ref{fig:composite_data_AUC}.
\begin{figure}
	\centering
	\includegraphics[width=\textwidth]{./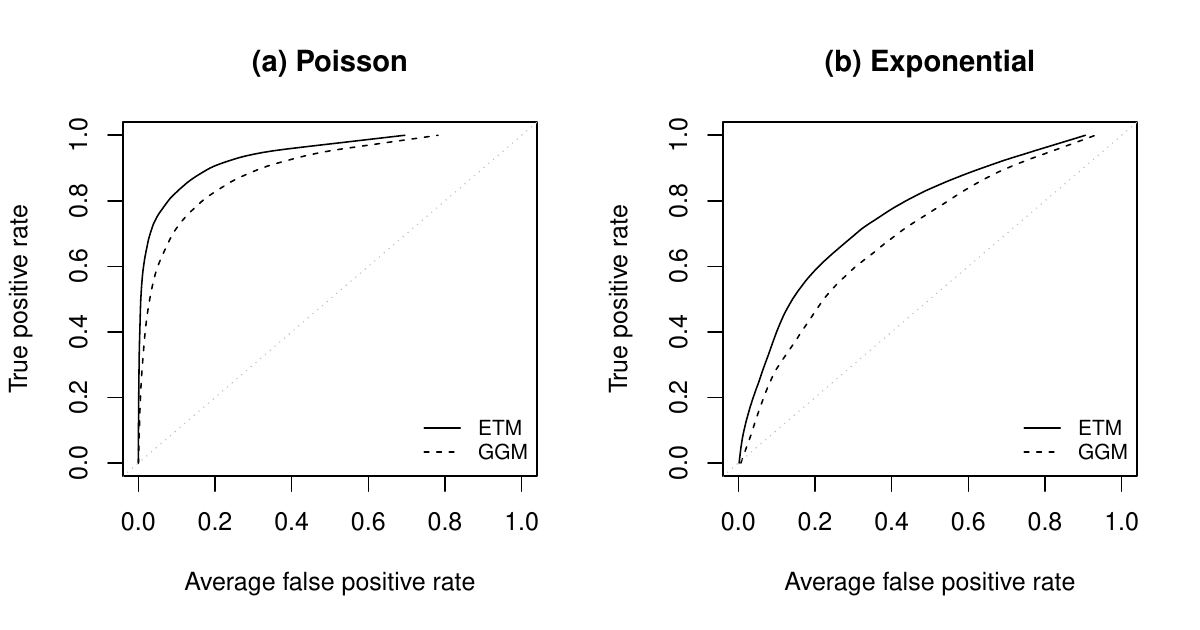} 
	\vskip-3ex
	\caption{Average ROC curves for pure data. ETM stands for the Exponential trace mode and GGM stands for Gaussian graphical model.}
	\label{fig:pure_data_ROC}
\end{figure}

\begin{figure}
	\centering
	\includegraphics[width=\textwidth]{./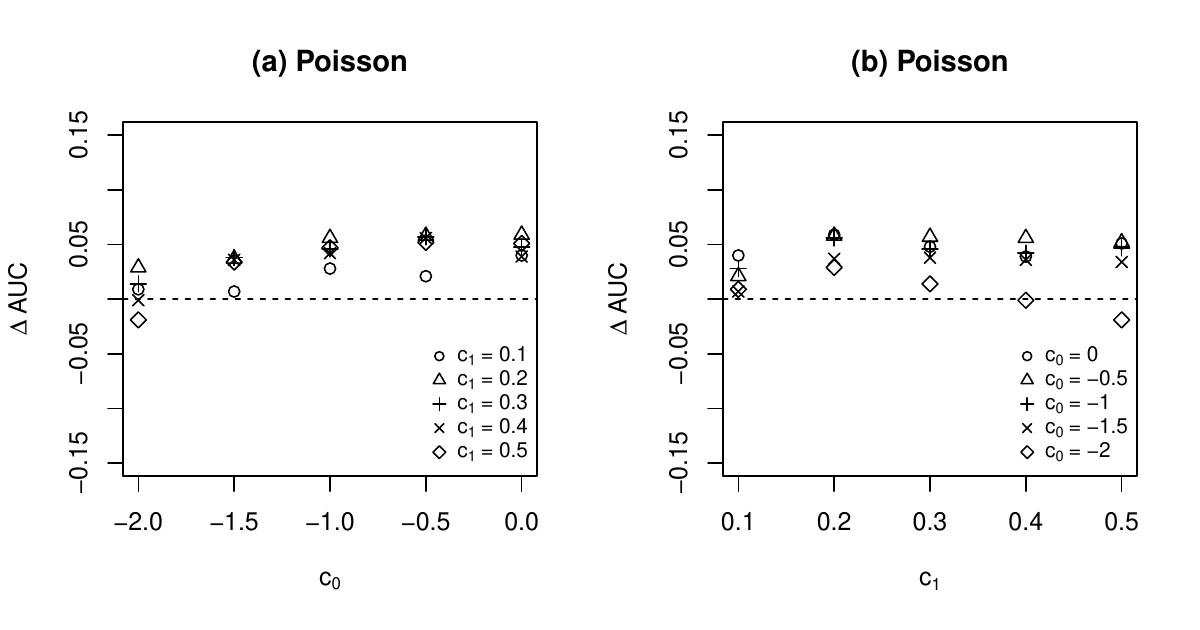} 
	\vskip-3ex
	\caption{Differences in AUC of average ROC curve between exponential trace model and Gaussian graphical model for Poisson data.}
	\label{fig:pure_data_AUC}
\end{figure}

\begin{figure}
	\centering
	\includegraphics[width=\textwidth]{./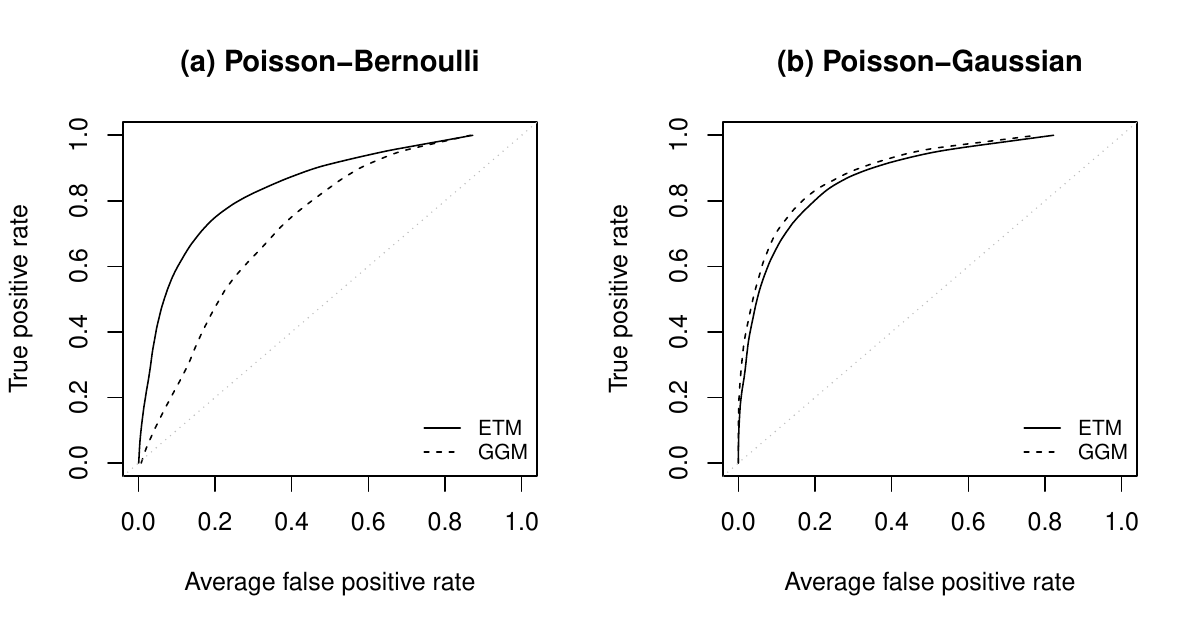}
	\vskip-3ex
	\caption{
		Average ROC curves for composite data. ETM stands for the Exponential trace mode and GGM stands for Gaussian graphical model.}
	\label{fig:composite_data_ROC}
\end{figure}

\begin{figure}
	\centering
	\includegraphics[width=\textwidth]{./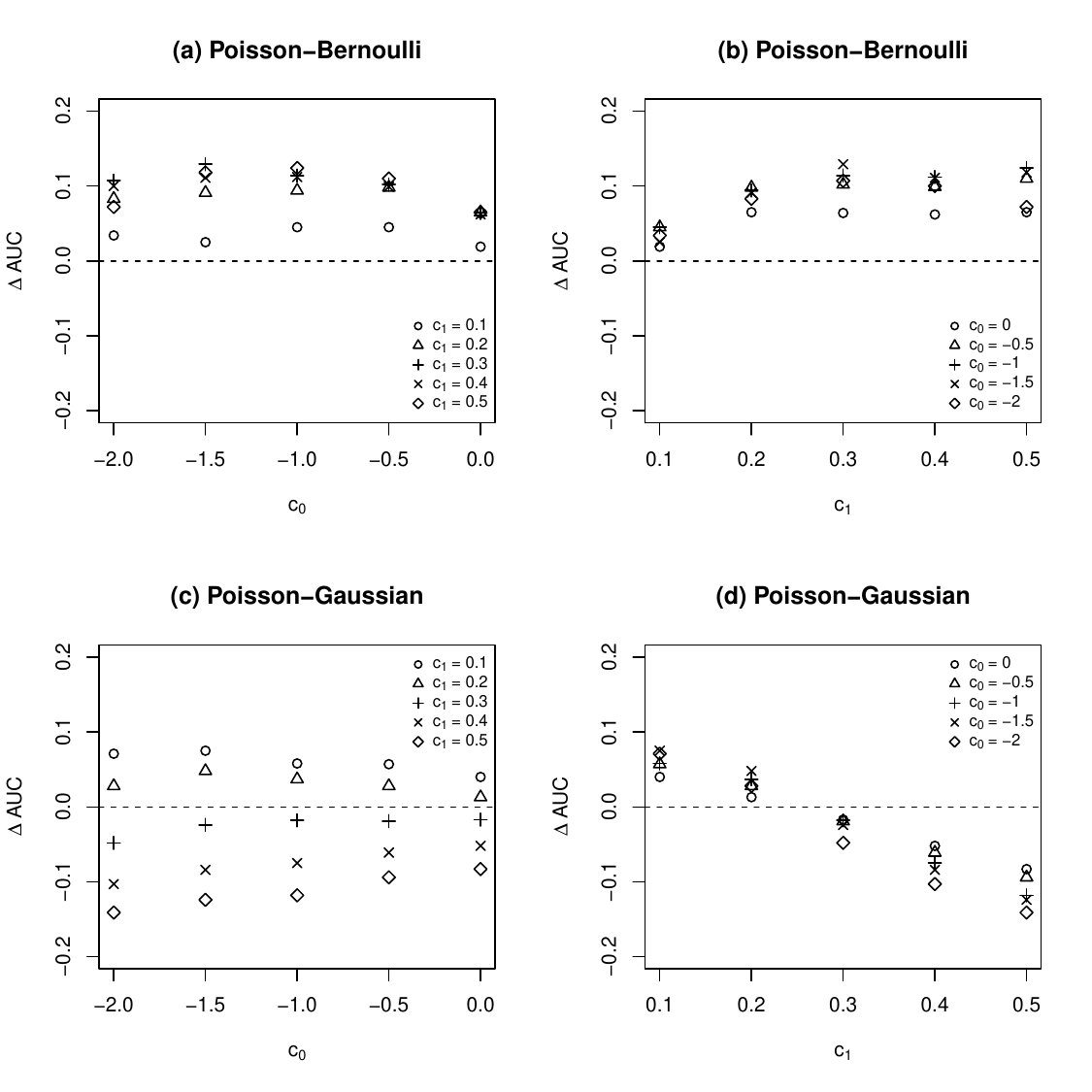}
	\vskip-3ex
	\caption{Differences in AUC of average ROC curves between exponential trace model and Gaussian graphical model for composite data.}
	\label{fig:composite_data_AUC}
\end{figure}

In the pure data type scenarios:  The exponential trace model outperforms the Gaussian graphical model substantially---the expoential trace model's  ROC curves lie entirely above the Gaussian graphical model's ROC curves---in the scenario of small interaction and small sufficient statistics $\Tdatap(\datav)$ (see Figure~\ref{fig:pure_data_ROC}).
In addition, we look into more configurations of Poisson type scenarios by varying $c_0$ and $c_1$:
the performance of the sampling-based approximation determines that of the exponential trace model (see Figure~\ref{fig:pure_data_AUC}).
When the interaction term and sufficient statistics are small to moderate, the exponential trace model has a larger AUC than the Gaussian graphical model. 
But when the interaction term and sufficient statistics are large, for example, $c_0 = -2$ and  $c_1 = 0.5$, the improvement is not guaranteed. 
For the composite data type scenarios:  The exponential trace model shows improved performance for Poisson-Bernoulli data but not for Poisson-Gaussian data (see Figures~\ref{fig:composite_data_ROC} and~\ref{fig:composite_data_AUC}). 
Bernoulli data have very small sufficient statistics so that the improvement in Poisson-Bernoulli data is substantial for a large range of interaction terms,
but Poisson-Gaussian data involves Gaussian coordinates, so that the performance of the exponential trace model is mixed. 
The comparative performance is not only determined by the performance of the approximation algorithm but also the degree to which the conditional dependence structure resembles the zero pattern of the precision matrix. 

In conclusion, the simulation study shows that: (1)~the exponential trace model can improve on the Gaussian graphical model for non-Gaussian data, especially when the sufficient statistics and small interactions are small; 
(2)~the approximation approach can struggle when sufficient statistics and interactions are large.
Based on this, we believe that our modeling approach has substantial potential: Some of this potential is realized by our current implementation, however some of the potential might require additional computational insights.


\section{Application to Neural Spike Data}
\label{sec:application}
In this section, we apply the proposed exponential trace model to neural spike data.
The temporal and spatial patterns of neural spikes capture the concurrent activity of neurons. Understand this is essential for learning neural circuits. Neural spike data is usually formulated as spike counts in a short time bin and modeled by a Poisson distribution~\citep{theis2016benchmarking}. We consider a data set of multi-electrode array recordings of spike trains in mouse retina~\citep{Demas:2003fq} obtained from the Retinal Wave Repository~\citep{waveRepository}.
We transform the spike time data into spike counts in time bins of $40\,ms$ following conventions in neural science~\citep{theis2016benchmarking}. 
The short time interval captures the instantaneous characteristics of neuron firing. 
The recording covers an $800 \times 800\,\mu m$ surface area and provides locations of each recorded unit in the form of $(x, y)$-coordinates. The number of recorded units ranges from 12 to 22 in different mice.  

The spike counts of each recorded unit range from 0 to 13 and roughly follow the mean-variance relationship of a Poisson distribution. The small counts imply: (1) the exponential trace model with square-root transformation is close to the Poisson one; (2) the properties of the data fit the scenario for which the proposed algorithm is appropriate.
These two observations render the exponential trace model with a square-root transformation appropriate.

In Figures~\ref{fig: 6WK_WT}, we present the recovered connections of recorded units for a 6-week-old wild-type mouse. 
To obtain sparse graphs, the maximum likelihood estimator is thresholded to 30~edges.
In the plotted graphs, the positions of the nodes correspond to to [slightly jittered] $(x,y)$-coordinates of the neurons in the recording. This allows us to consider spatial summaries, eg. the average physical length of edges in our graph.


The exponential trace model finds a more centralized graph than the Gaussian graphical model, that is,  neurons located together tend to connect in the exponential trace model approach while the estimated connections in the graph are longer in the Gaussian approach.
To evaluate this difference quantitatively, we compute the mean Euclidean distance between all pairs of directly connected neurons.
In the 6-week-old wild-type mouse, the mean distance is $169\,\mu m$ (SE: $24\,\mu m$) under the exponential trace model and $252\,\mu m$  (SE: $36\,\mu m$) under the Gaussian graphical model.
In addition, the exponential trace model recovers a main connection component and an isolated neuron unit (see Unit 7 in Figure~\ref{fig: 6WK_WT}). 
The isolated neuron unit is physically far away from the primary component and may belong to another functional group. In contrast, the Gaussian graphical model does not distinguish the location separation clearly but instead connects almost every units. 

These characteristics found in the exponential trace model but not the Gaussian graphical model align with our biological understanding. 
In particular, neurons transmit signals to others through synapses, which are physical connections between two neurons~\citep{lodish2008molecular}. This biological mechanism favors direct coordination of closely located neurons. Specifically, previous studies find that the degree two units spike together within some small time window decays with the distance separating the neurons in retina~\citep{Cutts:2014cw, wong1993transient,xu2011instructive}.

\begin{figure}
	\centering
	\includegraphics[width=\textwidth]{./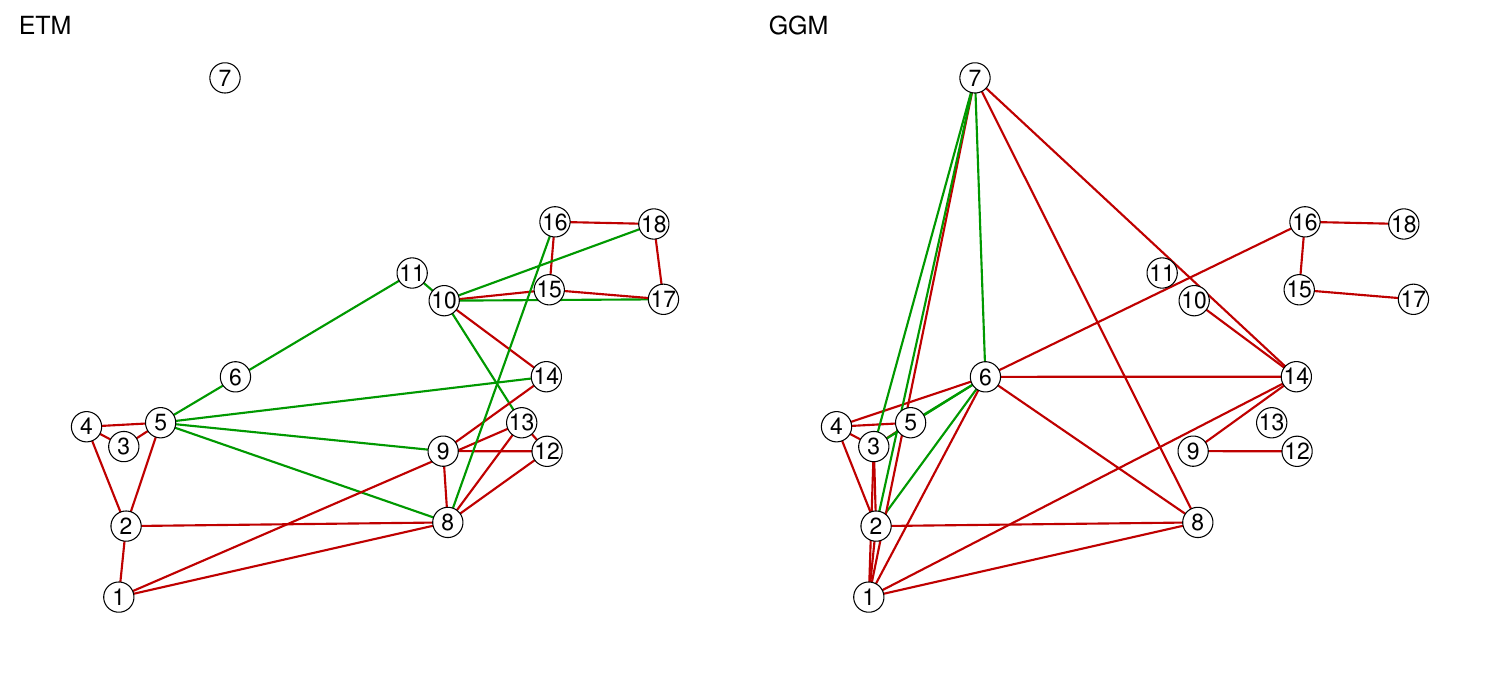} 
	\vskip-3ex
	\caption{Retinal connections recovered for a 6-week-old wild-type mouse of the Demas et al.\@ 2003 data. 
The positive interactions are drawn in red,and the negative ones in green. The left panel displays the connection recovered by the exponential model with square-root transformation; the right panel displays the connection recovered by the Gaussian graphical model.}
	\label{fig: 6WK_WT}
\end{figure}


\section{Discussion}\label{sec:discussion}
In this paper we proposed an exponential family-based framework to build graphical models.
This framework for graphical models allows for a wide range of data types as highlighted in Sections~\ref{sec:examplesstandard} and~\ref{sec:examplesnew} and yet ensures a rigid theoretical structure as demonstrated in Section~\ref{sec:estimation}.
The models are amenable to estimation based on maximum likelihood, which can be computed through a sampling-based approximation technique.

\section*{Acknowledgements} 
We also thank Jon Wellner, Marco Rossini, Mathias Drton,  Sam Koelle, Zack Almquist, Ali Shojaie, Lina Lin for their valuable input.


\bibliographystyle{agsm}
\bibliography{References}


\pagebreak
\appendix
\section{Proofs}

\subsection{Proof of Lemma~\ref{propertiesfamily}}

\begin{proof}[Proof of Lemma~\ref{propertiesfamily}]
  We prove the two properties in order. The main proof ideas can also be found in~\cite[Pages~193-195]{Berk72}.
\paragraph{Property 1} We first show that $\Matrixint$ is convex. 

For this, consider $\alpha\in[0,1]$ and $\matrix,\matrix'\in\Matrixint.$ Then, by definition of the normalization and by convexity of the exponential function,
\begin{align*}
e^{\normtotal(\alpha\matrix+(1-\alpha)\matrix')}
=&\int_\Domain e^{-\inprodtr{\alpha\matrix+(1-\alpha)\matrix'}{\Tdatap(\datav)}+\hterm(\datav)}\,d\measure\\
 \leq& \int_\Domain \big(\alpha e^{-\inprodtr{\matrix}{\Tdatap(\datav)}+\hterm(\datav)}+(1-\alpha)e^{-\inprodtr{\matrix'}{\Tdatap(\datav)}+\hterm(\datav)}\big)\,d\measure\\
 =&\,\alpha e^{\normtotal(\matrix)}+(1-\alpha)e^{\normtotal(\matrix')}
<\infty\,.
\end{align*}
Hence, $\normtotal(\alpha\matrix+(1-\alpha)\matrix')<\infty$, and thus, $\alpha\matrix+(1-\alpha)\matrix'\in\Matrixint.$ This concludes the proof of the first property.
\paragraph{Property 2} We now show that  for any $\matrix\in\Matrixint$, the coordinates of~$\Tdatap(\data)$ have moments of all orders with respect to $f_\matrix$. 

To this end, fix an $\matrix\in\Matrixint$. Since \Matrixint\ is open, there is a neighborhood ${\mathfrak M}_\matrix$ of $0_{\newdim\times \newdim}$ such that $\{\matrix-A:A\in{\mathfrak M}_\matrix\}\subset\Matrixint$.  For any $A\in{\mathfrak M}_\matrix$,  the moment generating function of $\Tdatap(\data)$ is finite:
\begin{align*}
  \E_\matrix e^{\inprodtr{A}{\Tdatap(\data)}}=\int_\Domain e^{-\inprodtr{\tensor-A}{\Tdatap(\datav)}+\hterm\left(\datav\right)-\normtotal(\matrix)}\,d\measure=e^{\normtotal(\matrix-A)-\normtotal(\matrix)}<\infty\,.
\end{align*}
This is a sufficient condition for the existence of all moments of~$\Tdatap(\data)$~\cite[Page~33]{Shao03} and thus concludes the proof of the second property.
\end{proof}

\subsection{Proof of Lemma~\ref{convexity}}

\begin{proof}[Proof of Lemma~\ref{convexity}] The claim follows readily from Lemma~\ref{derivative}, which is proved in the next section. Indeed, using the second derivates stated in Lemma~\ref{derivative}, we find for any $\tensor\in\Matrixint$ and $\tensor'\in\tensorspace,$
\begin{align*}
 &  \sum_{i,j,k,l=1}^\newdim\tensor\tensorindex' \,\frac{\partial}{\partial \tensor\tensorindex}\frac{\partial}{\partial \tensor\tensorindexp}\left(\inprodtr{\tensor}{\Tdatae(\datatotala)}+\normtotal(\tensor)\right)\tensor\tensorindexp'\\
=&\, n \sum_{i,j,k,l=1}^\newdim\tensor\tensorindex'\, \E_\tensor\left[\big(\Tdatae\tensorindex(\datatotal)-\E_\tensor\Tdatae\tensorindex(\datatotal)\big)\big(\Tdatae\tensorindexp(\datatotal)-\E_\tensor\Tdatae\tensorindexp(\datatotal)\big)\right]\tensor\tensorindexp'\\
=&\, n\,\E_\tensor\inprodtr{\tensor'}{\Tdatae(\datatotal)-\E_\tensor\Tdatae(\datatotal)}^2\,.
\end{align*}
The display implies that for any $\tensor\in\Matrixint$ and $\tensor'\in\tensorspace,$
\begin{align*}
  \sum_{i,j,k,l=1}^\newdim\tensor\tensorindex' \,\frac{\partial}{\partial \tensor\tensorindex}\frac{\partial}{\partial \tensor\tensorindexp}\left(\inprodtr{\tensor}{\Tdatae(\datatotala)}+\normtotal(\tensor)\right)\tensor\tensorindexp'\ \geq\ 0\,.
\end{align*}
This ensures convexity, and thus concludes the proof of Lemma~\ref{convexity}.
\end{proof}

\subsection{Proof of Lemma~\ref{derivative}}

\begin{proof}[Proof of Lemma~\ref{derivative}]
We prove the two claims in order.
\paragraph{Part 1} We start by taking the first derivative, showing that 
\begin{equation*}
  \nabla\tensorindex\left(\inprodtr{\tensor}{\Tdatae(\datatotala)}+\normtotal(\tensor)\right)=\Tdatae{\tensorindex}(\datatotala)-\E_\tensor\Tdatae{\tensorindex}(\datatotal)\,,
\end{equation*}
where we use the shorthand notation $\nabla\tensorindex:=\frac{\partial}{\partial \tensor\tensorindex}\,.$

Since the trace is linear, the derivative of the first term is
\begin{align*}
  \nabla\tensorindex\inprodtr{\tensor}{\Tdatae(\datatotala)}=\Tdatae\tensorindex(\datatotala)\,.
\end{align*}
For the second term, recall  that the normalization~\normtotal\ is given by
\begin{equation*}
\normtotal(\tensor)=\linkinv\int_\Domain e^{-\inprodtr{\tensor}{\Tdatap(\datav)}+\hterm(\datav)}d\measure\,.
\end{equation*}
Taking exponentials on both sides, we find
\begin{equation*}
\link^{\normtotal(\tensor)}=\int_\Domain\link^{-\inprodtr{\tensor}{\Tdatap(\datav)}+\hterm(\datav)}d\measure\,.
\end{equation*}
We can now take derivatives and get 
\begin{align*}
\linkd^{\normtotal(\tensor)}  \nabla\tensorindex\normtotal(\tensor)=&\int_\Domain\nabla\tensorindex\link^{-\inprodtr{\tensor}{\Tdatap(\datav)}+\hterm(\datav)}d\measure\\
=&-\int_\Domain\Tdatap\tensorindex(\datav)\linkd^{-\inprodtr{\tensor}{\Tdatap(\datav)}+\hterm(\datav)}d\measure\,,
\end{align*}
where we again use the linearity of the trace. Bringing the exponential factor back into the integral and using the assumed independence of the observations then yields
\begin{align*}
 \nabla\tensorindex\normtotal(\tensor)=&-\int_\Domain\Tdatap\tensorindex(\datav)\linkd^{-\inprodtr{\tensor}{\Tdatap(\datav)}+\hterm(\datav)-\normtotal(\tensor)}d\measure=-\E_\tensor\Tdatap\tensorindex(\data)=-\E_\tensor\Tdatae\tensorindex(\datatotal)\,.
\end{align*}
This provides the derivative for the second term. Collecting the pieces concludes the proof of the first part.
\paragraph{Part 2} We now compute the second derivative, showing that
\begin{align*}
&\nabla\tensorindex\nabla\tensorindexp \left(\inprodtr{\tensor}{\Tdatae(\datatotala)}+\normtotal(\tensor)\right)\\
&~~~~~~~=\E_\tensor\left[\big(\Tdatae\tensorindex(\datatotal)-\E_\tensor\Tdatae\tensorindex(\datatotal)\big)\big(\Tdatae\tensorindexp(\datatotal)-\E_\tensor\Tdatae\tensorindexp(\datatotal)\big)\right],
\end{align*}
where we again use the shorthand notation $\nabla\tensorindex=\frac{\partial}{\partial \tensor\tensorindex}\,.$

To prove this claim, recall that by Part~1,
\begin{equation*}
  \nabla\tensorindexp\left(\inprodtr{\tensor}{\Tdatae(\datatotala)}+\normtotal(\tensor)\right)=\Tdatae{\tensorindexp}(\datatotala)-\E_\tensor\Tdatae{\tensorindexp}(\datatotal)\,.
\end{equation*}
Since the first term is independent of $\tensor,$ we can focus on the second term. Independence of the observations and the model~\eqref{model} provide
\begin{equation*}
  \E_\tensor\Tdatae\tensorindexp(\datatotal)=  \E_\tensor\Tdatap\tensorindexp(\data)=\int_\Domain \Tdatap\tensorindexp(\datav) e^{-\inprodtr{\tensor}{\Tdatap(\datav)}+\hterm\left(\datav\right)-\normtotal(\tensor)}\,d\measure\,.
\end{equation*}
Taking derivatives, we find similarly as in Part~1
\begin{align*}
  &-\nabla\tensorindex\E_\tensor\Tdatae\tensorindexp(\datatotal)\\
=&-\int_\Domain \Tdatap\tensorindexp(\datav)\nabla\tensorindex e^{-\inprodtr{\tensor}{\Tdatap(\datav)}+\hterm\left(\datav\right)-\normtotal(\tensor)}\,d\measure\\
=&-\int_\Domain \Tdatap\tensorindexp(\datav)\left(-\Tdatap\tensorindex(\datav)-\nabla\tensorindex\normtotal(\tensor) \right) e^{-\inprodtr{\tensor}{\Tdatap(\datav)}+\hterm\left(\datav\right)-\normtotal(\tensor)}\,d\measure\\
=&-\int_\Domain \Tdatap\tensorindexp(\datav)\left(-\Tdatap\tensorindex(\datav)+\E_\tensor\Tdatap\tensorindex(\data) \right) e^{-\inprodtr{\tensor}{\Tdatap(\datav)}+\hterm\left(\datav\right)-\normtotal(\tensor)}\,d\measure\\
=&\int_\Domain\left(\Tdatap\tensorindex(\datav)-\E_\tensor\Tdatap\tensorindex(\data) \right) \left(\Tdatap\tensorindexp(\datav)-\E_\matrix\Tdatap\tensorindexp(\data)\right) e^{-\inprodtr{\tensor}{\Tdatap(\datav)}+\hterm\left(\datav\right)-\normtotal(\tensor)}\,d\measure\\
=&\ \E_\tensor\left[\big(\Tdatap\tensorindex(\data)-\E_\tensor\Tdatap\tensorindex(\data)\big)\big(\Tdatap\tensorindexp(\data)-\E_\tensor\Tdatap\tensorindexp(\data)\big)\right]\\
=&\,n\,\E_\tensor\left[\big(\Tdatae\tensorindex(\datatotal)-\E_\tensor\Tdatae\tensorindex(\datatotal)\big)\big(\Tdatae\tensorindexp(\datatotal)-\E_\tensor\Tdatae\tensorindexp(\datatotal)\big)\right].
\end{align*}
Plugging this in above concludes the proof of Part~2.
\end{proof}

%
%
\pagebreak
\section{Full Algorithm for the maximum likelihood estimator}\label{sec: full algorithm}
In this appendix, we present the full algorithm, which utilizes a backtracking line search to adaptively select step sizes and incorporate the applicable domain constraint. 
By convention, $g(\matrixtoy)$ is infinite for $\matrixtoy \notin \Tensor$. The inequality of backtracking line search implies that $\matrixtoy_{k-1} - \eta\nabla \widetilde{g}(\matrixtoy_{k-1}) \in \Tensor.$ In a practical implementation, we multiple $\eta$ by $\beta$ until $\matrixtoy_{k-1} - \eta\nabla \widetilde{g}(\matrixtoy_{k-1}) \in \Tensor.$

\begin{algorithm}
	\caption{Solving for the maximum likelihood estimator with a backtracking line search}
	\label{full algorithm}
	\SetKwInOut{Input}{Input}
	\SetKwInOut{Output}{Output}
	\tcp{$\eta$ : initial step size}
	\tcp{$\alpha, \beta$: backtracking parameters}
	\Input{$\Tdatae(\datatotal), \eta > 0, \alpha \in (0,0.5), \beta \in (0,1)$}
	\Output{$\tensorest$}
	\tcp{Solve for $\tensor_0$}
	
	$\tensor_0 \leftarrow \mathbf{0}_{p\times p}$\;
	
	\For{$i = 1, \dots, p$}{
		$(\tensor_0)_{ii} \leftarrow \argmin_{m\in\R}\Big\{m\Tdatae_{ii}(\datatotal)+  \log \int \exp \big(-m\Tdatap_{ii}(\datav) + \xi(\dataa_i)\big)d\dataa_i \Big\}$\;
	}
	\tcp{Generate sample set $\mcset$ from $f_{\tensor_0}=\prod_{i=1}^p f_{(\tensor_0)_{ii}}(\dataa_i)$}
	\For{$i = 1, \dots, p$}{
		Generate 10,000 random samples from $f_{(\tensor_0)_{ii}}(\dataa_i)$ for the $i$-th coordinate\;
	}
	\tcp{Apply gradient descent with a backtracking line search to~\eqref{eq: approximate_MLE}}
	$k\leftarrow0$\;
	$\matrixtoy_k\leftarrow\tensor_0$\;
	\Repeat{$\big|\widetilde{g}({\matrixtoy_{k}}) - \widetilde{g}({\matrixtoy_{k-1}})\big| < 10^{-4}$}{
		$k \leftarrow k+1$\;
		$\nabla \widetilde{g}(\matrixtoy_{k-1}) \leftarrow \Tdatae(\datatotal) -  \frac{\sum_{\datamc \in \mcset} \Tdatap(\datamc) e^{-\inprodtr{\matrixtoy_{k-1}-\tensor_0}{\Tdatap(\datamc)}}}{ \sum_{\datamc \in \mcset} e^{-\inprodtr{\matrixtoy_{k-1}-\tensor_0}{\Tdatap(\datamc)}}}$\;
		\tcp{Select the stepsize adaptively using a backtracking line search}
		\Repeat{$ \widetilde{g}(\matrixtoy_{k-1} - \eta \nabla \widetilde{g}(\matrixtoy_{k-1})) \leq  \widetilde{g}(\matrixtoy_{k-1}) - \alpha\eta\,\norm{\nabla \widetilde{g}(\matrixtoy_{k-1})}^2$}{
		$\eta \leftarrow \beta \eta$\;
	}
		$\matrixtoy_{k} \leftarrow \matrixtoy_{k-1} - \eta\nabla \widetilde{g}(\matrixtoy_{k-1})$\;
	}
	$\tensorest \leftarrow \matrixtoy_{k}$\;
\end{algorithm}
\pagebreak
\section{Data Generation}\label{sec:data generation}
Here we describe how to generate samples from an exponential trace model in the form of 
\begin{equation*}
f_\tensor(\datav)=e^{-\inprodtr{\matrix}{\Tdatap(\datav)}+\hterm(\datav)-\normtotal(\matrix)}.
\end{equation*}
Generating data from a multivariate distribution directly is difficult for moderate node numbers. Instead, we consider a Gibbs sampler to sample from the conditional distribution at each iteration. 
Conditioning on all the other variables $\datav_{-j}$, the density of $x_j$ is
\begin{equation*}
f_\matrix(\dataa_{j}\mid \datav_{-j}) \sim e^{-\tensor_{jj}\Tdatap_{jj} - 2\sum_{k\neq j}\tensor_{jk}\Tdatap_{jk}-\hterm(x_j)}.
\end{equation*}
We generate the conditional distribution using a slice sampler~\cite{neal2003slice} with R package \texttt{MfUSampler}~\cite{mahani2014multivariate}. 

Note the slice sampler was designed for continuous variables. For discrete coordinates, we uniformly spread the probability in the spike at an integer $c$ into the interval between $c$ and $c+1$. This defines a continuous density 
\begin{equation*}
	f_\matrix(y_j \mid \datav_{-j})\sim e^{-\tensor_{jj}\lfloor y_j \rfloor - 2\sum_{k\neq j}\tensor_{jk} \Tdatap_{jk}(\lfloor y_j \rfloor,\dataa_{k})-\hterm(\lfloor y_j \rfloor)},
\end{equation*}
where $\lfloor y_j \rfloor$ represents the largest integer less than $y_j$. We sample from the above continuous density and take $\lfloor y_j \rfloor$ as the realization of a discrete $\dataa_{j}$.

\end{document}